\sloppy \pagestyle{plain}
\newtheorem{theorem}[equation]{Theorem}
\newtheorem*{theorem*}{Theorem}
\newtheorem{lemma}[equation]{Lemma}
\newtheorem{corollary}[equation]{Corollary}
\newtheorem{conjecture}[equation]{Conjecture}
\newtheorem{question}[equation]{Question}
\newtheorem{proposition}[equation]{Proposition}
\theoremstyle{definition}
\newtheorem{definition}[equation]{Definition}
\newtheorem*{definition*}{Definition}
\theoremstyle{remark}
\newtheorem{remark}[equation]{Remark}
\makeatletter\@addtoreset{equation}{section}
\makeatletter\@addtoreset{section}{part}
\def \K {\mathcal{K}}
\def \CC {\mathcal{C}}
\def \FF {\mathcal{F}}
\def \RR {\mathcal{R}}
\def \P {\mathbb{P}}
\def \R {\mathbb{R}}
\def \Q {\mathbb{Q}}
\def \C {\mathbb{C}}
\def \qlin {\sim_{\Q}}
\newcommand{\Cr}{\operatorname{Cr}}
\def \Bs {\mathrm{Bs}\,}
\def \Bir {\mathrm{Bir}}
\def \Aut {\mathrm{Aut}}
\def \GL {\mathrm{GL}}
\newcommand{\N}{\operatorname{N}}
\def \le {\leqslant}
\title{Jordan property for Cremona groups}
\author{Yuri Prokhorov}
\author{Constantin Shramov}
\address{
Steklov Institute of Mathematics, 8 Gubkina street, Moscow 119991, Russia
\newline
and
\newline\indent
Laboratory of Algebraic Geometry, GU-HSE, 7 Vavilova street,
Moscow 117312, \mbox{Russia}}
\email{prokhoro@gmail.com}
\email{costya.shramov@gmail.com}
\begin{document}

\begin{abstract}
Assuming Borisov--Alexeev--Borisov conjecture, we
prove that there is a constant $J=J(n)$ such that
for any rationally connected variety $X$ of dimension $n$
and any
finite subgroup $G\subset\Bir(X)$ there exists
a normal abelian subgroup $A\subset G$ of index at most $J$.
In particular, we obtain that the Cremona group $\Cr_3=\Bir(\P^3)$ enjoys
the Jordan property.
\end{abstract}

\maketitle


\section{Introduction}
\label{section:intro}
Unless explicitly stated otherwise, all varieties below are assumed to be defined over
an algebraically closed field~$\Bbbk$ of characteristic~$0$.

The \textit{Cremona group} $\Cr_n(\Bbbk)$ is the group of birational transformations of
the projective space $\mathbb P^n$.
The group $\Cr_2(\Bbbk)$ and its subgroups have been
a subject of research for many years (see~\cite{Deserti2012},
\cite{Dolgachev-Iskovskikh}, \cite{Serre-2008-2009}
and references therein).
The main philosophical
observation is that this group is very large and it is
``very far'' from being a linear group. However, the system of its finite
subgroups seems more accessible, and in particular
happens to enjoy many features of finite subgroups
in $\GL_n(\Bbbk)$ (which are actually not obvious even for
the subgroups of~$\GL_n(\Bbbk)$).

\begin{theorem}[C.\,Jordan, {see e.\,g.~\cite[Theorem 36.13]{Curtis-Reiner-1962}}]
\label{theorem:Jordan}
There is a constant $I=I(n)$ such that for any
finite subgroup $G\subset\GL_n(\C)$
there exists
a normal abelian subgroup $A\subset G$ of index at most $I$.
\end{theorem}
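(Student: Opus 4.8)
The plan is to reduce to the unitary group and then extract an abelian normal subgroup consisting of the elements lying very close to the identity, bounding its index by a packing argument on the compact group $U(n)$. First I would replace $G$ by a conjugate sitting inside $U(n)$: the Hermitian form $\ang{v,w}_G=\sum_{g\in G}\ang{gv,gw}$ is positive definite and $G$-invariant, so after conjugating by the matrix bringing it to the standard form we may assume $G\subset U(n)$. Conjugation is an isomorphism preserving normal subgroups and their indices, so this step costs nothing, and it lets me use the metric geometry of the compact group $U(n)$.

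Second, I would equip $M_n(\C)$ with the operator norm $\|\cdot\|$ and record the commutator contraction estimate. Writing $a=I+(a-I)$ and $b=I+(b-I)$, a direct expansion gives $\|ab-ba\|\le 2\|a-I\|\,\|b-I\|$, and since $a,b$ are unitary we get $\|[a,b]-I\|=\|(ab-ba)a^{-1}b^{-1}\|\le 2\|a-I\|\,\|b-I\|$. I then fix an absolute constant $\epsilon<1/2$ and set $V=\{g\in U(n):\|g-I\|<\epsilon\}$. Two features of $V$ will be used repeatedly: it is invariant under conjugation by any unitary matrix, because conjugation preserves $\|\cdot\|$; and since $2\epsilon^2<\epsilon$, the set $V$ is closed under forming commutators of its elements.

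Third, let $A$ be the subgroup of $G$ generated by $G\cap V$. The unitary conjugation-invariance of $V$ gives $h(G\cap V)h^{-1}=G\cap V$ for every $h\in G$, so $A\triangleleft G$. The heart of the matter, and the step I expect to be the main obstacle, is to prove that $A$ is abelian. The contraction estimate immediately yields a central element: if $c$ is a non-identity element of $A$ nearest to $I$, then for any generator $a\in G\cap V$ we have $\|[a,c]-I\|\le 2\epsilon\|c-I\|<\|c-I\|$, and since $[a,c]\in A$ minimality forces $[a,c]=I$, so $c$ is central; feeding this back into successively smaller iterated commutators shows that $A$ is nilpotent. Promoting this nilpotency to genuine commutativity is the delicate point: here one exploits that the commutator subgroup is, by the conjugation-invariance of $V$ and the inequality $2\epsilon^2<\epsilon$, again generated by elements of $G\cap V$, so the same minimal-distance descent can be applied to it, and driving the distances to the identity below the positive minimal distance realized in the finite group $G$ pins down the derived subgroup. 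Getting the bookkeeping of this descent exactly right, and with the sharp constant, is where the real work lies.

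Finally I would bound the index by packing. If $g,g'\in G$ satisfy $\|g-g'\|<\epsilon$, then $\|g^{-1}g'-I\|=\|g'-g\|<\epsilon$, so $g^{-1}g'\in G\cap V\subset A$ and $gA=g'A$; hence representatives of distinct cosets of $A$ are pairwise at distance $\ge\epsilon$ in the compact metric group $U(n)$. The maximal size of an $\epsilon$-separated subset of $U(n)$ is a finite number depending only on $n$ and on the fixed $\epsilon=\epsilon(n)$, and calling this number $I(n)$ we obtain $[G:A]\le I(n)$, which is exactly the asserted bound.
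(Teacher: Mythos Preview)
The paper does not give its own proof of this statement: Jordan's theorem is quoted from \cite{Curtis-Reiner-1962} and used as a black box. So there is nothing in the paper to compare against; what follows is an assessment of your argument on its own terms.

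Your outline is the classical Frobenius route, and the reduction to $U(n)$, the contraction estimate $\|[a,b]-I\|\le 2\|a-I\|\,\|b-I\|$, the normality of $A=\langle G\cap V\rangle$, and the packing bound on $[G:A]$ are all fine. The gap is precisely where you locate it, but your suggested descent does not close it. Your observation that $[A,A]$ is again generated by a subset of $G\cap V$ is correct (conjugation-invariance of $V$ means that conjugates of commutators of generators are again commutators of generators), and iterating shows that $A^{(k)}$ is generated by elements in ever-smaller balls, hence is eventually trivial. But this proves only that $A$ is \emph{solvable}; the successive indices $[A^{(i)}:A^{(i+1)}]$ are uncontrolled, so one cannot extract from this an abelian normal subgroup of index bounded in terms of $n$ alone. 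The ``nearest element is central'' trick likewise yields only nilpotency, not commutativity.

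The step that is actually missing is linear-algebraic rather than purely metric. Having chosen a non-commuting pair $a,b\in G\cap V$ with $\|b-I\|$ minimal, the commutator $c=[a,b]$ is strictly closer to $I$, hence by minimality commutes with every element of $G\cap V$; in particular $[a,c]=I$. Then $a^k b a^{-k}=c^k b$, so $c^k=[a^k,b]$, while simultaneous diagonalisation of the commuting pair $(a,c)$ together with the conjugacy $bab^{-1}=c^{-1}a$ shows that every eigenvalue of $c$ is a ratio of two eigenvalues of $a$. Playing these spectral constraints against the inequality $\|c^k-I\|\le 2\|a^k-I\|\,\|b-I\|$ forces $c=I$, the desired contradiction. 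This is the argument carried out in \cite[Theorem~36.13]{Curtis-Reiner-1962}; a pure distance-shrinking descent cannot substitute for it.
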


This leads to the following definition
(cf.~\cite[Definition~2.1]{Popov2011}).

\begin{definition}
\label{definition:Jordan}
A group $\Gamma$ is called \emph{Jordan}
(alternatively, we say
that~$\Gamma$ \emph{has
Jordan property})
if there is a constant $J$ such that
for any finite subgroup $G\subset\Gamma$ there exists
a normal abelian subgroup $A\subset G$ of index at most $J$.
\end{definition}

Theorem~\ref{theorem:Jordan} implies that
all linear algebraic groups
over an arbitrary field $\Bbbk$ with~\mbox{$\mathrm{char}(\Bbbk)=0$}
are Jordan.
The same question is of interest
for other ``large'' groups, especially those
that are more accessible for study on the level
of finite subgroups than on the global level,
in particular, for the groups
of birational selfmaps of algebraic varieties.
A complete answer is known in dimension at most~$2$.
Moreover, already in dimension~$2$ it appears to be non-trivial,
i.\,e. there are surfaces providing
a positive answer to the question, as well as surfaces
providing a negative answer.

First of all, the automorphism group of any curve
is Jordan.
The Cremona group of rank $2$
is Jordan too.

\begin{theorem}[J.-P.\,Serre
{\cite[Theorem~5.3]{Serre2009}, \cite[Th\'eor\`eme~3.1]{Serre-2008-2009}}]
The Cremona group~\mbox{$\Cr_2(\Bbbk)$} is Jordan.
\end{theorem}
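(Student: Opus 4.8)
The plan is to reduce an arbitrary finite subgroup $G\subset\Cr_2(\Bbbk)=\Bir(\P^2)$ to a biregular action on a minimal rational surface and then analyze the resulting automorphism group. First I would \emph{regularize} the action: by Weil's theorem on regularization of birational group actions (equivalently, by resolving the graph of the action $G$-equivariantly), there is a smooth projective surface $S$ birational to $\P^2$ on which $G$ acts faithfully and biregularly, so that $G\subset\Aut(S)$. Next I would run the $G$-equivariant minimal model program on $S$: contracting $G$-orbits of disjoint $(-1)$-curves produces a $G$-minimal smooth rational surface, and by the Manin--Iskovskikh classification such a surface is of exactly one of two types: either (a) a del Pezzo surface with $(\Pic S)^{G}\cong\Z$, or (b) a conic bundle $\pi\colon S\to\P^1$ with $(\Pic S)^{G}\cong\Z^2$ on which $G$ preserves $\pi$.

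In case (a) I would show directly that $\Aut(S)$ is a linear algebraic group and invoke Theorem~\ref{theorem:Jordan}. A del Pezzo surface $S$ has degree $K_S^2=d\in\{1,\dots,9\}$, and for a suitable multiple $m=m(d)$ the linear system $|-mK_S|$ is very ample and yields an $\Aut(S)$-equivariant embedding $S\hookrightarrow\P^{N-1}$ with $N=h^0(S,-mK_S)$ bounded in terms of $d$ by Riemann--Roch. Hence $\Aut(S)\subset\PGL_N(\Bbbk)$ for a value of $N$ that is universally bounded, so $G\subset\PGL_N(\Bbbk)$, and the Jordan property with a uniform constant follows from Theorem~\ref{theorem:Jordan} in the form valid over an arbitrary field of characteristic zero.

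In case (b) the projection $\pi$ induces an exact sequence
\[
1\longrightarrow G_F\longrightarrow G\stackrel{\pi_\ast}{\longrightarrow} G_B\longrightarrow 1,
\]
where $G_B\subset\Aut(\P^1)=\PGL_2(\Bbbk)$ is the image of $G$ on the base and $G_F$ is the fiberwise part. Here $G_B$ is a finite subgroup of $\PGL_2(\Bbbk)$, hence Jordan with the small universal constant for $\PGL_2$; and restriction to a general fiber $F\cong\P^1$ gives an injection $G_F\hookrightarrow\Aut(F)=\PGL_2(\Bbbk)$, since an element acting trivially on a general fiber acts trivially on $S$. Thus both $G_F$ and $G_B$ are finite subgroups of $\PGL_2(\Bbbk)$, and I would then combine them to produce a normal abelian subgroup of $G$ of bounded index.

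The main obstacle is precisely this last combination. Knowing $G_F$ and $G_B$ separately Jordan does \emph{not} suffice: an extension of an abelian group by an abelian group can be arbitrarily far from abelian, so a naive ``Jordan-by-Jordan'' argument fails. To get around this I would use the explicit list of finite subgroups of $\PGL_2(\Bbbk)$ (cyclic, dihedral, $A_4$, $S_4$, $A_5$): apart from the finitely many groups of order at most $60$, each contains a \emph{characteristic} cyclic subgroup $C_F$ of index at most $2$, which is therefore normal in all of $G$. The crucial geometric input is that such a cyclic $C_F$ fixes two sections (or one bisection) of the conic bundle, pinning it inside a distinguished torus whose normalizer meets $\PGL_2(\Bbbk)$ only in the torus itself and the order-two inversion; consequently the base-moving elements can conjugate $C_F$ only through $\pm 1$. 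This bounds the action of (a finite-index subgroup of) $G_B$ on $C_F$ and reduces $G$ to a controlled central/metabelian extension from which a uniformly bounded normal abelian subgroup can be extracted. Establishing this uniform bound, rather than one depending on $G$, is the technical heart of the conic bundle case.
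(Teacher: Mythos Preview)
The paper does not give its own proof of this theorem; it is simply quoted from Serre's work as background. So there is no ``paper's proof'' to match. Your proposal follows the classical route that Serre himself takes: regularize, run the $G$-equivariant MMP on a rational surface, and treat the del~Pezzo and conic-bundle outputs separately using linearity in the first case and the explicit list of finite subgroups of $\PGL_2$ in the second.

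It is worth contrasting this with what the present paper's method yields when specialized to $n=2$ (where Conjecture~\ref{conjecture:BAB} is trivially known). The paper would also regularize and run $G$-MMP, but then, instead of analysing the extension structure in the conic-bundle case, it would use Lemma~\ref{lemma:Fano-fixed-point} and the lifting machinery of \S\ref{section:RC} to locate a bounded-index subgroup $F\subset G$ with a fixed point $P$ on the smooth model, and finish by applying Theorem~\ref{theorem:Jordan} to the faithful action of $F$ on $T_P$. This sidesteps entirely the ``abelian-by-abelian'' difficulty you flag: one never has to assemble a normal abelian subgroup out of $G_F$ and $G_B$ by hand. What your approach buys, on the other hand, is that it stays within elementary surface theory and the finite-subgroup list for $\PGL_2$, with no appeal to log canonical centers, subadjunction, or Graber--Harris--Starr.

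On correctness: your del~Pezzo case is fine. In the conic-bundle case your key claim, that conjugation by any $g\in G$ acts on the characteristic cyclic $C_F$ through $\{\pm 1\}$, is right and your geometric reason is the correct one (normalizer of a fiberwise torus). Where your sketch is genuinely incomplete is the last step: having $C_F$ central in an index-$2$ subgroup $G'\subset G$ with $G'/C_F$ an extension of a finite subgroup of $\PGL_2$ by a group of order $\le 2$ still leaves you with a central extension of a bounded group by a cyclic group, and such an extension need not be close to abelian (think of extraspecial $p$-groups). You must further pass to a bounded-index subgroup of $G_B$ that is cyclic and argue that its preimage, together with $C_F$, actually generates an abelian group; this requires one more geometric input (e.g.\ a common fixed point on the surface, which brings you back to the paper's strategy) or a cohomological bound on the extension class. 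You correctly identify this as ``the technical heart''; just be aware that the $\pm 1$ reduction alone does not finish it.
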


On the other hand, starting from dimension $2$ one can construct varieties
with non-Jordan groups of birational selfmaps.

\begin{theorem}[Yu.\,Zarhin {\cite{Zarhin10}}]
\label{theorem:Zarhin}
Suppose that $X\cong E\times\P^1$, where $E$ is an abelian
variety of dimension $\dim(E)>0$.
Then the group $\Bir(X)$ is not Jordan.
\end{theorem}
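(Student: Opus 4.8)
\section*{Proof proposal}

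The plan is to refute the Jordan property directly, by producing inside $\Bir(X)=\Bir(E\times\P^1)$ a sequence of finite subgroups $G_n$ whose structure forbids a uniform bound. Recall that if a finite group $G$ contains a normal abelian subgroup of index $m$, then in particular $G$ contains an abelian subgroup of index at most $m$; so it suffices to exhibit finite $G_n\subset\Bir(X)$ in which the minimal index of any (not necessarily normal) abelian subgroup tends to infinity with $n$. The natural candidates are the finite Heisenberg groups $H_n$, i.e.\ the central extensions
\[
1\longrightarrow \mu_n\longrightarrow H_n\longrightarrow (\Z/n\Z)^2\longrightarrow 1
\]
of order $n^3$ whose commutator pairing on $(\Z/n\Z)^2$ is the nondegenerate symplectic pairing. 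For such a group the image of an abelian subgroup in $(\Z/n\Z)^2$ is isotropic, hence of order at most $n$; together with the central $\mu_n$ this gives abelian subgroups of order at most $n^2$, so every abelian subgroup of $H_n$ has index at least $n$. Thus realizing $H_n\subset\Bir(X)$ for all $n$ immediately contradicts Definition~\ref{definition:Jordan}.

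To build these subgroups I would use two sources of elements of $\Bir(X)$: the translations $t_a\colon (e,z)\mapsto(e+a,z)$ for $a\in E$ (which act biregularly, trivially on the $\P^1$ factor), and the fibrewise birational maps $\PGL_2(\Bbbk(E))$ of $\P^1_{\Bbbk(E)}$ over the first projection $X\to E$. Since $\dim E>0$, the group $E[n]$ of $n$-torsion points is nonzero, and the Weil pairing $e_L$ attached to a polarization $L$ is nondegenerate on it; choose $a,b\in E[n]$ of exact order $n$ with $e_L(a,b)$ a primitive $n$-th root of unity. Fix a theta function $\theta$ (a section of a line bundle $L$ with $a,b\in K(L)=\ker(\phi_L\colon E\to\hat E)$), and set $\lambda=T_b^*\theta/\theta$, which is a genuine rational function on $E$ because $T_b^*L\cong L$. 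Then define
\[
\alpha(e,z)=(e+a,\,z),\qquad \beta(e,z)=\bigl(e+b,\,\lambda(e)\,z\bigr).
\]
Here $\alpha$ has order $n$, while $\beta^n$ scales $z$ by the telescoping product $\prod_{j=0}^{n-1}\lambda(e+jb)=T_{nb}^*\theta/\theta=1$, so $\beta$ also has order $n$.

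The heart of the argument is the commutator computation. A direct calculation gives
\[
[\alpha,\beta](e,z)=\Bigl(e,\ \frac{\lambda(e-a-b)}{\lambda(e-b)}\,z\Bigr)
=\Bigl(e,\ \frac{\theta(e-a)\,\theta(e-b)}{\theta(e)\,\theta(e-a-b)}\,z\Bigr),
\]
and the point is that for $a,b\in K(L)$ this theta quotient is the \emph{constant} $e_L(a,b)\in\mu_n$. Hence $[\alpha,\beta]$ is the central scaling $z\mapsto e_L(a,b)\,z\in\PGL_2(\Bbbk)$ of exact order $n$, which commutes with both $\alpha$ and $\beta$. Therefore $G_n:=\langle\alpha,\beta\rangle$ is a quotient of $H_n$; since the images of $\alpha,\beta$ in $\Bir(X)$ modulo fibrewise scalings are the independent translations $t_a,t_b$, and the central scaling is faithful of order $n$, the map $H_n\to G_n$ is an isomorphism. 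Letting $n\to\infty$ then contradicts the existence of a Jordan constant $J$.

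I expect the main obstacle to be precisely the identity displayed above: verifying that the second-difference theta quotient is constant and equal to a \emph{primitive} $n$-th root of unity. This is exactly the content of the theta-group (Heisenberg) structure of $\mathcal G(L)$, so the cleanest route is to phrase $\alpha$ and $\beta$ as the images in $\Bir(X)$ of suitable elements of the Mumford theta group and to invoke the standard fact that its commutator map realizes the Weil pairing $e_L$, which is nondegenerate on $K(L)$. The remaining steps—checking that $\lambda\in\Bbbk(E)$, that $\alpha,\beta$ have order $n$, that $G_n\cong H_n$, and the elementary lower bound on abelian indices in $H_n$—are routine once this identity is in hand.
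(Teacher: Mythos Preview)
The paper does not supply its own proof of this statement: Theorem~\ref{theorem:Zarhin} is quoted in the introduction as a result of Zarhin, with a bare citation to \cite{Zarhin10} and no argument. So there is nothing in the paper to compare your proposal against.

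That said, your proposal is correct and is essentially the argument in Zarhin's paper. The key idea---realising the Mumford theta group $\mathcal G(L)$ (equivalently, finite Heisenberg groups $H_n$) inside $\Bir(E\times\P^1)$ by letting the $K(L)$-part act by translation on $E$ and the central $\mathbb G_m$-part act by scaling on the $\P^1$ fibre---is exactly Zarhin's construction. Your explicit formula with $\lambda=T_b^*\theta/\theta$ is one concrete incarnation of the theta-group action, and the commutator identity you display is precisely the statement that the commutator pairing on $\mathcal G(L)$ recovers the Weil pairing $e_L$. The only places where a reader might want slightly more precision are: (i) to make $T_b^*\theta/\theta$ a genuine element of $\Bbbk(E)^\times$ you must first fix an isomorphism $T_b^*L\cong L$, and different choices change $\lambda$ by a scalar, hence change $\beta$ by a central element---harmless for the group generated; and (ii) to guarantee $a,b\in K(L)$ with $e_L(a,b)$ a primitive $n$-th root of unity you should specify the polarisation, e.g.\ take $L=L_0^{\otimes n}$ for a principal polarisation $L_0$, so that $K(L)=E[n]$ and $e_L$ is the usual nondegenerate Weil pairing on $E[n]$. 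With these in place the argument goes through, and the elementary fact that every abelian subgroup of $H_n$ has index at least $n$ finishes the job exactly as you say.
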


In any case, in dimension~$2$ it is possible to give a complete
classification of surfaces with Jordan groups
of birational automorphisms.

\begin{theorem}[V.\,Popov {\cite[Theorem~2.32]{Popov2011}}]
Let $S$ be a surface. Then the group
$\Bir(S)$ is Jordan if and only if $S$ is not
birational to $E\times\P^1$, where $E$ is an
elliptic curve.
\end{theorem}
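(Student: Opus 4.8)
Necessity is immediate. If $S$ is birational to $E\times\P^1$ for some elliptic curve $E$, then $\Bir(S)\cong\Bir(E\times\P^1)$, and the latter is not Jordan by Theorem~\ref{theorem:Zarhin} applied to the abelian variety $E$ of dimension $1$. Taking the contrapositive shows that Jordanness of $\Bir(S)$ forces $S$ to be non-birational to every such product.

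For sufficiency I would fix a surface $S$ not birational to $E\times\P^1$ and argue that the abstract group $\Bir(S)$ is Jordan, running through the Enriques--Kodaira classification of the birational class of $S$; since $\Bir(S)$ depends only on that class, I may replace $S$ by any convenient model. First I would isolate a purely group-theoretic lemma: an extension $1\to N\to\Gamma\to Q\to 1$ in which $N$ is Jordan with finite subgroups of uniformly bounded rank and $Q$ is Jordan is again Jordan. With this in hand the classification splits into three regimes. If $S$ is rational, then $\Bir(S)\cong\Cr_2(\Bbbk)$, which is Jordan by the theorem of Serre quoted above. If $S$ is ruled but irrational, then $S$ is birational to $C\times\P^1$ for a smooth curve $C$; the genus-$0$ case is rational and the genus-$1$ case is the excluded product, so $g(C)\ge 2$ and $\Aut(C)$ is finite by Hurwitz. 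Here $\Bir(S)\cong\PGL_2(\Bbbk(C))\rtimes\Aut(C)$, and since finite subgroups of $\PGL_2$ over any field lie in the bounded list (cyclic, dihedral, $A_4$, $S_4$, $A_5$) they have abelian subgroups of bounded index and bounded rank; combining with finiteness of $\Aut(C)$ via the extension lemma gives Jordanness.

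The remaining regime is $\kappa(S)\ge 0$, where $S$ is non-ruled. Then the minimal model $S_{\min}$ is unique and every birational selfmap of a minimal non-ruled surface is biregular, so $\Bir(S)\cong\Aut(S_{\min})$, and it suffices to show that the automorphism group of a projective surface is Jordan. I would treat $\Aut(S_{\min})$ as the group of $\Bbbk$-points of an algebraic group. By Chevalley's theorem its identity component $\Aut^0(S_{\min})$ is an extension of an abelian variety by a linear algebraic group; the former has only abelian finite subgroups and the latter is Jordan by Theorem~\ref{theorem:Jordan}, so $\Aut^0(S_{\min})$ is Jordan of bounded rank. Any finite subgroup of $\Aut(S_{\min})$ acts on the cohomology lattice $H^*(S_{\min},\Z)$ with kernel contained in $\Aut^0(S_{\min})$ and image inside some $\GL_N(\Z)$, whose finite subgroups have uniformly bounded order by Minkowski's theorem. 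Feeding these through the extension lemma yields Jordanness of $\Aut(S_{\min})$. (Alternatively one reaches the same trichotomy by regularizing a given finite $G\subset\Bir(S)$ on a smooth model and running the $G$-equivariant minimal model program, in which a conic bundle over an elliptic base is precisely the forbidden $E\times\P^1$.)

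The main obstacle is twofold. First, the extension lemma is delicate, since Jordanness is not inherited by arbitrary extensions: the bounded-rank hypothesis on finite subgroups of the kernel must be stated and verified carefully, and it is exactly this that the Chevalley decomposition and the lattice embeddings are designed to supply. Second, controlling the component group of $\Aut(S_{\min})$ uniformly---most sharply for K3 surfaces, where $\Aut$ may be infinite---requires the action on $H^2(S_{\min},\Z)$ together with Minkowski's bound on finite subgroups of $\GL_N(\Z)$; turning this pointwise finiteness into a single Jordan constant independent of the chosen finite subgroup is the crux of the argument.
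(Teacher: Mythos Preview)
The paper does not prove this theorem at all: it is quoted in the introduction as a result of Popov, with a reference to \cite[Theorem~2.32]{Popov2011}, and is used only to motivate Question~\ref{question:Jordan}. There is therefore no proof in the paper to compare your proposal against.

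That said, your outline is essentially the strategy Popov follows: reduce to the birational class, use Serre's theorem for the rational case, handle ruled surfaces over higher-genus curves via the fibration over $C$, and for $\kappa(S)\ge 0$ replace $\Bir$ by $\Aut$ of the minimal model and argue that automorphism groups of projective varieties are Jordan. One point to correct: your inclusion for the kernel of the cohomology representation goes the wrong way. One always has $\Aut^0(S_{\min})\subset\ker\bigl(\Aut(S_{\min})\to\GL(H^*(S_{\min},\Z))\bigr)$, not the reverse, and in general the kernel can be strictly larger than the identity component. What makes the argument work is that this kernel has only finitely many components (a theorem of Fujiki and Lieberman for compact K\"ahler manifolds; for surfaces with $\kappa\ge 0$ one can also check it type by type). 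Once you know $\ker/\Aut^0$ is finite, your extension lemma applies exactly as you intend, since the Jordan constant is allowed to depend on the fixed surface $S$. With that adjustment the sketch is sound.
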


Somehow, in higher dimensions the answer remained
unknown even for a more particular question.

\begin{question}[J.-P.\,Serre {\cite[6.1]{Serre2009}}]
\label{question:Jordan}
Is the group $\Cr_n(\Bbbk)$ Jordan?
\end{question}

Question~\ref{question:Jordan} asks about some kind of
boundedness related to the geometry of rational
varieties. It is not a big surprise that it appears
to be related to another ``boundedness conjecture'',
that is a particular case of the well-known
Borisov--Alexeev--Borisov conjecture (see~\cite{Borisov-1996}).

\begin{conjecture}
\label{conjecture:BAB}
For a given
positive integer $n$, Fano varieties
of dimension $n$ with terminal singularities
are bounded, i.\,e. are contained in a finite
number of algebraic families.
\end{conjecture}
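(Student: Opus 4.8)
The plan is to prove boundedness by reducing it to two effective numerical estimates and then invoking standard embedding theorems. Since terminal singularities are in particular $\epsilon$-log canonical for some fixed $\epsilon>0$ (indeed for any $\epsilon\le 1$), a terminal Fano variety lies in the wider class of $\epsilon$-lc Fano varieties, and it suffices to bound the latter for a fixed $\epsilon$; I would keep $\epsilon$ fixed throughout and induct on $n=\dim X$. The reduction to numerical data rests on a classical principle: a collection of polarized varieties is bounded once the polarizing divisor can be taken very ample with the number of sections and the top self-intersection both bounded. For a Fano $X$ the natural candidate polarization is a multiple of $-K_X$, so the problem splits into (a) bounding the anticanonical degree $(-K_X)^n$ from above, and (b) producing a fixed integer $m=m(n,\epsilon)$ for which $-mK_X$ is very ample, or at least defines a birational map onto an image of bounded degree.

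For step (b) I would use the non-klt-centre (``tie-breaking'') method. Given a point $x\in X$, one seeks an effective $\Q$-divisor $D\qlin -\lambda K_X$ with $\lambda$ bounded such that $(X,D)$ fails to be klt exactly at $x$, with $x$ an isolated non-klt centre. Nadel/Kawamata--Viehweg vanishing applied to $K_X+(-mK_X)-D$ then forces the restriction $H^0(X,-mK_X)\to H^0(\{x\},-mK_X)$ to be surjective, which separates points and tangent directions and yields effective very ampleness. Such singular $D$ are manufactured by a dimension count: since $h^0(X,-mK_X)$ grows like $m^n(-K_X)^n/n!$, one can impose a high-multiplicity singularity at $x$ provided $(-K_X)^n$ is controlled from below and above. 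This is precisely where (a) and (b) become intertwined, so the argument must be arranged as a simultaneous induction rather than run in sequence.

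The engine behind the upper bound in step (a) is the theory of complements. I would aim to prove that there is an integer $N=N(n,\epsilon)$ such that every such $X$ admits an $N$-complement: an effective $B$ with $(X,B)$ log canonical and $N(K_X+B)\qlin 0$, equivalently $-NK_X\qlin NB\ge 0$ with controlled singularities. Restricting complements to non-klt centres and to divisors extracted by the minimal model program lets one descend in dimension and feed the inductive hypothesis; combined with the effective birationality of (b) this bounds $(-K_X)^n$. One then upgrades the resulting \emph{log birational} boundedness to honest boundedness by a limiting/ACC argument on coefficients and discrepancies, which also absorbs the fact that the Cartier index of $K_X$ need not be bounded across the family.

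The main obstacle is exactly this coupling of (a) and (b): neither the volume bound nor the effective birationality is available cheaply in isolation, and making the simultaneous induction on dimension close --- in particular establishing boundedness of complements in full generality and controlling how non-klt centres vary in families --- is the deep content of the conjecture. This is the part that resists elementary methods, and it is the reason the statement is taken here as a hypothesis rather than proved; I would expect essentially all of the difficulty to lie there.
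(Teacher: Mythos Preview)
The paper does not prove this statement: it is labelled \emph{Conjecture}~\ref{conjecture:BAB} and is used throughout as a standing hypothesis (Theorems~\ref{theorem:RC-Jordan} and~\ref{theorem:p-groups} are explicitly conditional on it, and the only unconditional case is $n\le 3$ via~\cite{KMMT-2000}). So there is no ``paper's own proof'' to compare against; the correct response to the prompt is that no proof is expected here.

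Your sketch is not a proof either, and you say as much in your final paragraph. What you have written is a fair outline of the strategy that was eventually carried out by Birkar: reduce to $\epsilon$-lc Fanos, run a coupled induction on dimension proving simultaneously (i) boundedness of complements, (ii) effective birationality of $|-mK_X|$, and (iii) an upper bound on $(-K_X)^n$, and then upgrade log birational boundedness to genuine boundedness. As a roadmap this is accurate, but every step you flag as ``the deep content'' really is deep: boundedness of complements requires, among other things, the effective adjunction and the BAB conjecture itself in lower dimension, and the passage from log birational boundedness to boundedness is not a routine ACC argument. None of this is in the present paper, nor could it be --- the paper predates Birkar's work and treats the conjecture purely as input. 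If the assignment was to supply a proof, the honest answer is that the statement is assumed, not proved, here.
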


Note that if Conjecture~\ref{conjecture:BAB} holds
in dimension $n$, then it also holds in all
dimensions~\mbox{$k\le n$}.

\smallskip
The main purpose of this paper is to
show that modulo Conjecture~\ref{conjecture:BAB}
the answer to Question~\ref{question:Jordan}
is positive even in the more general
setting of rationally connected
varieties (see Definition~\ref{definition:RC}),
and moreover the corresponding constant may be chosen
in some uniform way.
Namely, we prove the following.

\begin{theorem}
\label{theorem:RC-Jordan}
Assume that Conjecture~\ref{conjecture:BAB} holds
in dimension $n$.
Then there is a constant~\mbox{$J=J(n)$} such that
for any rationally connected
variety $X$ of dimension $n$ defined over
an arbitrary (not necessarily algebraically closed)
field $\Bbbk$ of characteristic~$0$
and for any finite subgroup $G\subset\Bir(X)$ there exists
a normal abelian subgroup $A\subset G$ of index at most $J$.
\end{theorem}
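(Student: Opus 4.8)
The plan is to induct on $n$, using the Minimal Model Program to reduce an arbitrary finite $G\subset\Bir(X)$ to a group acting on a fibration whose fibres are bounded by Conjecture~\ref{conjecture:BAB} and whose base is rationally connected of smaller dimension. First I would regularize the action: resolving the graph of $G$ $G$-equivariantly produces a smooth projective $\tilde X$, birational to $X$, with $G\subset\Aut(\tilde X)$, and $\tilde X$ is again rationally connected since rational connectedness is a birational invariant (Definition~\ref{definition:RC}). Passing to $\bar\Bbbk$ only enlarges $\Bir$, and a normal abelian subgroup of bounded index found over $\bar\Bbbk$ can be made Galois-stable, hence descends to $\Bbbk$; so I assume $\Bbbk=\bar\Bbbk$ and $G\subset\Aut(\tilde X)$.

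Next I would run the $G$-equivariant MMP on $\tilde X$. Since $\tilde X$ is uniruled, it terminates in a $G$-Mori fibre space $\phi\colon Y\to S$, with $Y$ having terminal $\Q$-factorial singularities, $-K_Y$ relatively ample, relative $G$-invariant Picard number one, $\dim S<n$, and $S$ rationally connected (being dominated by $Y$). The generic fibre $Y_\eta$ is a Fano variety with terminal singularities of dimension $d=n-\dim S\le n$ over $K=\Bbbk(S)$, and the fibration yields the exact sequence
\[
1\longrightarrow G_\eta\longrightarrow G\longrightarrow G_S\longrightarrow 1,
\]
where $G_S\subset\Aut(S)\subset\Bir(S)$ and $G_\eta$ acts on $Y_\eta$ over $K$. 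The boundedness of Conjecture~\ref{conjecture:BAB} now enters. If $\dim S=0$ then $Y$ is a terminal Fano $n$-fold; by the conjecture these form finitely many families, so a uniformly bounded $-mK_Y$ gives an $\Aut(Y)$-equivariant embedding $Y\hookrightarrow\P^{N}$ with $N=N(n)$, whence $G\subset\PGL_{N+1}(\Bbbk)$ is Jordan with constant $I(N+1)$ by Theorem~\ref{theorem:Jordan}. In general the conjecture in dimension $d\le n$ bounds $Y_\eta$, so $G_\eta\hookrightarrow\PGL_{N+1}(\bar K)$ is a finite linear group of bounded degree $N=N(n)$, while the inductive hypothesis applied to the rationally connected $S$ of dimension $<n$ gives a normal abelian subgroup of $G_S$ of index at most $J(n-1)$.

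The crux --- and the step I expect to be the main obstacle --- is to convert these fibrewise and base-level bounds into a bound for $G$. One cannot simply multiply Jordan constants across the exact sequence: an extension of a Jordan group by a Jordan group need not be uniformly Jordan, the obstruction being the Heisenberg (theta) groups of order $m^3$ with $m\to\infty$, which are exactly what render $\Bir(E\times\P^1)$ non-Jordan in Theorem~\ref{theorem:Zarhin}. There such a group is assembled from translations by $E[m]\cong(\Z/m)^2$ on the abelian base together with the fibrewise $\PGL_2$-action, so the whole point is that rational connectedness excludes this translation structure. I would exploit this by bounding the finite $p$-subgroups of $\Bir(X)$ directly: after regularization a $p$-subgroup $P$ admits a fixed point on a suitable rationally connected model (a property that genuinely fails for $E\times\P^1$, where translations act freely), and linearizing the action at a smooth fixed point embeds $P\hookrightarrow\GL_n(\Bbbk)$. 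Consequently every Sylow subgroup of $G$ has bounded rank and, for all primes $p>I(n)$, is abelian (a $p$-group inside $\GL_n$ with an abelian subgroup of index $<p$ is abelian), so no Heisenberg group of large order $p^3$ can occur and the non-abelian behaviour is confined to the bounded set of primes $\le I(n)$.

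Turning this $p$-local control --- together with the inductive Jordan bound on $G_S$ and the linearity of $G_\eta$ --- into a single normal abelian subgroup of index depending only on $n$ requires a structural result on finite groups: one bounds the non-abelian simple composition factors through Theorem~\ref{theorem:Jordan} and then reassembles the solvable part from its bounded-rank Sylows. Making this final group-theoretic assembly uniform, so that both the Heisenberg and the affine $(\Z/p)\rtimes(\Z/(p-1))$-type obstructions are provably \emph{excluded} rather than merely controlled prime-by-prime, is where the real difficulty lies, and it is precisely here that rational connectedness, through the fixed-point property, is indispensable; one then sets $J(n)$ by tracking the constants through the induction, the case $n\le 1$ being immediate.
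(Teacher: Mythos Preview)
Your strategy has a genuine structural gap, and you have correctly located it: inducting on the Jordan property forces you through the exact sequence
\[
1\longrightarrow G_\eta\longrightarrow G\longrightarrow G_S\longrightarrow 1,
\]
and Jordan bounds on $G_\eta$ and $G_S$ simply do not compose, as the Heisenberg obstruction shows. Your proposed repair --- bounding $p$-subgroups via fixed points and then reassembling $G$ from its Sylows --- has two problems. First, the claim that a finite $p$-subgroup acting on a rationally connected model has a fixed point is exactly as hard as the main theorem: there is no shortcut for $p$-groups, and proving it requires the same MMP-and-lifting machinery you would need for arbitrary $G$. Second, even granting the $p$-local input, the ``structural result on finite groups'' you invoke at the end is not supplied; ruling out large Heisenberg primes does not by itself produce a normal abelian subgroup of bounded index (consider iterated extensions at the finitely many bad primes).

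The paper avoids the extension problem entirely by inducting on a stronger, more rigid property: the class $\mathcal{R}(n)$ has \emph{almost fixed points}, meaning there is $J=J(n)$ such that every finite $G\subset\Aut(X)$ contains a subgroup $F$ of index $\le J$ with a fixed point on $X$ (Theorem~\ref{theorem:RC-fixed-point}). This statement does propagate through a Mori fibre space: by induction a bounded-index subgroup $F_S\subset G_S$ fixes a point $P\in S$; its preimage $F\subset G$ then acts on the fibre over $P$. The key technical input (Corollary~\ref{corollary:G-contraction}, proved via non-klt centres and Kawamata subadjunction) produces an $F$-invariant \emph{rationally connected} subvariety $Z\subsetneq Y$ dominating $P$, so induction on $\dim Z<n$ yields a fixed point for a further bounded-index subgroup. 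The same lifting lemma pulls the resulting invariant rationally connected subvariety back through each flip and divisorial contraction to the smooth model $\tilde X$. Once a bounded-index $F\subset G$ fixes a smooth point $P\in\tilde X$, the faithful action on $T_P\tilde X$ embeds $F\hookrightarrow\GL_n(\Bbbk)$ and Theorem~\ref{theorem:Jordan} finishes the job; normality of the abelian subgroup comes from the elementary Lemma~\ref{lemma:normal}.

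In short: you reached for fixed points only as a patch for $p$-groups, whereas the paper makes the fixed-point property the \emph{inductive hypothesis itself}. That single change dissolves the extension problem, because a fixed point on a subvariety is already a fixed point on the ambient variety --- no group-theoretic assembly is required.
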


Note that Conjecture~\ref{conjecture:BAB}
is settled in dimension~$3$ (see~\cite{KMMT-2000}), so
we have the following
\begin{corollary}
The group $\Cr_3(\Bbbk)$ is Jordan.
\end{corollary}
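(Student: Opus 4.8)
The plan is to apply Theorem~\ref{theorem:RC-Jordan} directly to the projective space $X = \P^3$. First I would observe that $\P^3$ is a rationally connected variety (indeed, it is rational) of dimension $n = 3$, so it lies within the scope of that theorem. The Cremona group $\Cr_3(\Bbbk)$ is by definition $\Bir(\P^3)$, which is exactly the group $\Bir(X)$ to which the theorem applies.

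The only hypothesis of Theorem~\ref{theorem:RC-Jordan} that needs to be checked is that Conjecture~\ref{conjecture:BAB} holds in the relevant dimension $n = 3$. As recorded just above in the excerpt, this case of the Borisov--Alexeev--Borisov conjecture is in fact a theorem: by~\cite{KMMT-2000}, three-dimensional Fano varieties with terminal singularities form a bounded family. Feeding this in, the single standing assumption of Theorem~\ref{theorem:RC-Jordan} is satisfied unconditionally for $n = 3$.

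Consequently the theorem produces a constant $J = J(3)$ such that every finite subgroup $G \subset \Bir(\P^3)$ contains a normal abelian subgroup of index at most $J$. By Definition~\ref{definition:Jordan}, this is precisely the statement that $\Cr_3(\Bbbk) = \Bir(\P^3)$ is Jordan, which completes the argument. Since the corollary is a direct specialization of an already-established result, there is no genuine obstacle to overcome here; the entire mathematical content sits in the unconditional resolution of Conjecture~\ref{conjecture:BAB} in dimension three, which is what removes the hypothesis of Theorem~\ref{theorem:RC-Jordan} and makes the conclusion unconditional.
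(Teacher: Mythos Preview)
Your proposal is correct and matches the paper's approach exactly: the paper simply notes that Conjecture~\ref{conjecture:BAB} is settled in dimension~$3$ by~\cite{KMMT-2000} and derives the corollary immediately from Theorem~\ref{theorem:RC-Jordan} applied to~$\P^3$. There is no additional argument in the paper beyond what you have written.
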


As an application of the method we use to prove Theorem~\ref{theorem:RC-Jordan},
we can also derive some information about $p$-subgroups of Cremona groups.

\begin{theorem}\label{theorem:p-groups}
Assume that Conjecture~\ref{conjecture:BAB} holds
in dimension $n$.
Then there is a constant~\mbox{$L=L(n)$} such that 
for any rationally connected
variety $X$ of dimension $n$ 
defined over
an arbitrary (not necessarily algebraically closed)
field $\Bbbk$ of characteristic~$0$
and for any prime $p>L$, every finite
$p$-subgroup of $\Bir(X)$ is an abelian 
group generated by at most $n$ elements.
\end{theorem}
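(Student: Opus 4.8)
The plan is to reduce the problem, via an equivariant minimal model program, to the study of finite $p$-subgroups of automorphism groups of Fano varieties from a bounded family, where one can argue through the theory of linear algebraic groups. First I would replace $X$ by a model on which $G$ acts regularly: resolving the graph of the $G$-action $G$-equivariantly produces a smooth projective rationally connected variety with a regular faithful $G$-action. Running a $G$-equivariant minimal model program on it terminates in a $G$-Mori fiber space $f\colon Y\to Z$, with $Y$ having terminal singularities and relative $G$-invariant Picard number one. If $Z$ is a point, then $Y$ is a Fano variety with terminal singularities, which by Conjecture~\ref{conjecture:BAB} lies in a bounded family; if $\dim Z>0$, then $Z$ is rationally connected of smaller dimension and the geometric generic fiber of $f$ is again a bounded Fano variety. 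This dichotomy sets up an induction on $n=\dim X$.

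For the Fano stage I would use that the automorphism group of a Fano variety is a linear algebraic group (its Albanese vanishes), and that over a bounded family its dimension, the order of its component group, and its torsion primes are all bounded by a constant depending only on $n$. Consequently there is $R=R(n)$ such that for $p>R$ any finite $p$-subgroup $G\subset\Aut(Y)$ lies in the identity component, consists of semisimple elements, and, being commutative for $p$ large, is contained in a maximal torus $T$; since $T$ acts faithfully on the irreducible variety $Y$, its rank is at most $\dim Y$, so $G$ is abelian and generated by at most $\dim Y$ elements. When $\dim Z>0$ I would apply this to the action of the kernel $G_Z\subset G$ on the geometric generic fiber (of dimension $n-\dim Z$), bounding its number of generators by $n-\dim Z$, and apply the inductive hypothesis to the image $\bar G$ of $G$ in $\Bir(Z)$, bounding its number of generators by $\dim Z$. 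Because the minimal number of generators is subadditive in any group extension $1\to G_Z\to G\to\bar G\to 1$, the group $G$ is generated by at most $n$ elements. Abelianness of $G$ itself then follows from Theorem~\ref{theorem:RC-Jordan}: for $p$ exceeding the Jordan constant $J(n)$, a normal abelian subgroup of index a power of $p$ at most $J(n)$ must be all of $G$. Taking $L(n)=\max\{J(n),R(n)\}$ yields the assertion.

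I expect the main obstacle to be the uniform control, over the bounded family supplied by Conjecture~\ref{conjecture:BAB}, of the invariants of the automorphism group schemes -- their dimension, component groups, and torsion primes -- since this is exactly what turns boundedness into a single constant $R(n)$ valid for all members simultaneously; the classical input that a commutative $p$-group of semisimple elements is toral for $p$ larger than the torsion primes is then routine. A secondary technical point is to carry the equivariant minimal model program and the passage to generic fibers through correctly over the non-closed fields $\Bbbk$ and $\Bbbk(Z)$ arising in the induction, which is precisely why the theorem is formulated over an arbitrary field of characteristic zero.
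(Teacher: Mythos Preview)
Your approach is plausibly correct but takes a substantially more circuitous route than the paper's, and in doing so does not fully exploit the machinery you already invoke. The paper's proof is essentially a two-line corollary of the fixed-point result (Theorem~\ref{theorem:RC-fixed-point}): arguing exactly as in the proof of Theorem~\ref{theorem:RC-Jordan}, one finds a subgroup $F\subset G$ of index bounded by a constant $L=L(n)$ that fixes a smooth point on a regularization and hence embeds faithfully into $\GL_n(\Bbbk)$ via the tangent space; an abelian $p$-subgroup of $\GL_n(\Bbbk)$ is simultaneously diagonalizable and therefore generated by at most $n$ elements; and for $p>L$ the index $[G:F]$, being a power of $p$ not exceeding $L$, must equal $1$. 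That is all.

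By contrast, you run a parallel induction through the Mori fiber space structure and appeal to the fine structure of $\Aut(Y)$ for $Y$ in the bounded Fano family: uniform bounds on the dimension of $\Aut^0$, the order of $\pi_0(\Aut)$, and the torsion primes. These bounds are obtainable (via the anticanonical embedding into a fixed $\P^N$ one gets $\Aut(Y)\subset\PGL_{N+1}$, and then constructibility arguments control $\pi_0$ and the root data in families), but they constitute real extra work that the paper avoids entirely. Note also a redundancy in your plan: you invoke Theorem~\ref{theorem:RC-Jordan} at the end to force abelianness, but that theorem is itself proved via the fixed-point theorem and the tangent-space embedding into $\GL_n$; once you have that embedding, both abelianness \emph{and} the sharp generator bound $n$ are immediate, so the detour through maximal tori of $\Aut^0(Y)$ and the subadditivity in the extension $1\to G_Z\to G\to\bar G\to 1$ is unnecessary. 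A minor point: in your Fano stage you assert $G$ is ``commutative for $p$ large'' before placing it in a torus, but at that moment commutativity has not yet been established; you would need either Jordan for the bounded-dimensional $\Aut^0(Y)$ or an appeal to Theorem~\ref{theorem:RC-Jordan} already there, which again shows that the fixed-point route is doing the real work.
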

 
\begin{remark}
An easy consequence of Theorem~\ref{theorem:p-groups}
is that if $\Bbbk$ is an algebraically closed fields
of characteristic $0$, and $m>n$ are positive integers,
then there does not exist embedding of groups 
$\Cr_m(\Bbbk)\subset\Cr_n(\Bbbk)$. Indeed, for any $p$ it is easy 
to construct an abelian $p$-group $A\subset\GL_m(\Bbbk)\subset\Cr_m(\Bbbk)$
that is not generated by less than $m$ elements.
Note that the same result is already known by~\cite[\S1.6]{Demazure70}
or~\cite[Theorem~B]{Cantat-PGL}.
\end{remark}

\smallskip
The plan of the proof of Theorem~\ref{theorem:RC-Jordan}
(that is carried out in Section~\ref{section:Jordan})
is as follows.
Given a rationally connected variety $X$ and a finite group $G\subset\Bir(X)$,
take a smooth regularization $\tilde X$
of $G$ (see~\mbox{\cite[Theorem~3]{Sumihiro-1974}}).
We are going to show that $\tilde{X}$ has a point $P$ fixed
by a subgroup $H\subset G$ of bounded index
and then apply Theorem~\ref{theorem:Jordan}
to $H$ acting in the tangent space $T_P(\tilde X)$.
If $\tilde{X}$ is a $G$-Mori fiber space
(see Section~\ref{section:preliminaries} for a definition), then,
modulo Conjecture~\ref{conjecture:BAB},
we may assume that there is a \emph{non-trivial} $G$-Mori
fiber space structure $\tilde{X}\to S$, i.\,e. $S$ is not a point.
By induction one may suppose that there is a subgroup
$H$ of bounded index that fixes a point in $S$.
Using the results of Section~\ref{section:RC}
(that are based on the auxiliary results of
Section~\ref{section:preliminaries}), we show that $\tilde{X}$
contains a $G$-invariant rationally connected subvariety.
Furthermore, the same assertion holds for an arbitrary smooth $\tilde{X}$;
this follows from the corresponding assertion for
a $G$-Mori fiber space obtained by running a $G$-Minimal Model
Program on $\tilde{X}$ by the results of Section~\ref{section:RC}.
Using induction in dimension once again we conclude that
there is actually a point in $\tilde{X}$ fixed by $H$.

The main technical result that allows us to prove
Theorem~\ref{theorem:RC-Jordan} is Corollary~\ref{corollary:G-contraction}
that lets us lift $G$-invariant rationally connected
subvarieties along $G$-contractions.
Actually, it has been essentially proved
in~\cite[Corollary~1.7(1)]{HaconMcKernan07}.
The only new feature that we really need is the action of a finite
group. Since this forces us to rewrite the statements and the proofs
in any case, we use the chance to write down the details
of the proof that were only sketched by the authors
of~\cite{HaconMcKernan07}. We also refer a reader to~\cite{Kollar-Ax-2007}
and~\cite{Hogadi-Xu-2009} for ideas of similar flavour.

\smallskip
\emph{Acknowledgements.} 
We would like to thank J.-P.\,Serre who attracted our attention
to the questions discussed in this paper.
We are also grateful to I.\,Cheltsov, O.\,Fujino, S.\,Gorchinskiy and A.\,Kuznetsov
for useful discussions, and to a referee for interesting comments.
A part of this work was written during the first author's stay at the
\mbox{Max-Planck-Institut} f\"ur Mathematik in Bonn.
He would like to thank MPIM for hospitality and support.
Both authors were partially supported by
RFBR grants~\mbox{No.~11-01-00336-a}, \mbox{11-01-92613-KO\_a},
the grant of Leading Scientific Schools No.~5139.2012.1,
and AG Laboratory SU-HSE, RF~government
grant ag.~11.G34.31.0023.
The first author was partially supported by
Simons-IUM fellowship.
The second author was partially supported by the grants
\mbox{MK-6612.2012.1} and~\mbox{RFBR-12-01-33024}.

\section{Preliminaries}
\label{section:preliminaries}

The purpose of this section is to establish several
auxiliary results that will be used in Section~\ref{section:RC}.
It seems that most of them are well known to experts,
but we decided to include them for completeness since
we did not manage to find proper references.

Throughout the rest of the paper we use the standard language of the
singularities of pairs (see \cite{Kollar-1995-pairs}).
By \emph{strictly log canonical} singularities
we mean log canonical singularities that are not Kawamata log
terminal.
By a general point of a (possibly reducible) variety~$Z$
we will always mean a point in a Zariski open dense subset of $Z$.
Whenever we speak about the canonical class, or the singularities
of pairs related to a (normal) reducible variety,
we define everything componentwise
(note that connected components of a normal variety are
irreducible).

Let $X$ be a~normal variety, let $B$ be an effective $\Q$-divisor on $X$
such that the \mbox{$\Q$-divisor} $K_X+B$ is $\Q$-Cartier.
A subvariety $Z\subset X$
is called \emph{a center of non Kawamata log terminal singularities}
(or \emph{a center of non-klt singularities}) of the log pair~\mbox{$(X, B)$} 
if~\mbox{$Z=\pi(E)$} for some
divisor $E$ on some log resolution $\pi:\hat{X}\to X$
with discrepancy $a(X, B; E)\leqslant -1$.
A subvariety $Z\subset X$
is called \emph{a center of non log canonical singularities}
of the log pair~\mbox{$(X, B)$} if $Z$ is an image of some
divisor with
discrepancy strictly less than~$-1$ on some log resolution.
A center of non-klt singularities $Z$ of the log pair
$(X, B)$ is called \emph{minimal} if no other center
of non-klt singularities of
$(X, B)$ is contained in $Z$.

\begin{remark}\label{remark:finite-number}
In general it is not enough to consider one log resolution
to detect all centers of non-klt singularities of a log pair, but
the \emph{union} of these centers can be figured out using one
log resolution. Note that this
does not mean that there is only a finite number of centers
of non-klt singularities of a given log pair! Actually, the
latter happens if and only if the log pair is log canonical.
\end{remark}

Suppose that there is an action
of some finite group $G$ on
$X$ such that $B$ is $G$-invariant.
Let $Z_1$ be a center of non-klt singularities of the pair
$(X, B)$, let $Z_1, \ldots, Z_r$
be the \mbox{$G$-orbit} of the subvariety $Z_1$, and put
$Z=\bigcup Z_i$. We say that $Z$ is \emph{a $G$-center
of non-klt singularities of the pair $(X, B)$},
and call $Z$ a \emph{minimal $G$-center of non-klt singularities}
if no other $G$-center
of non-klt singularities of the pair
$(X, B)$ is contained in $Z$.
Note that one has~\mbox{$Z_i\cap Z_j=\varnothing$} for $i\neq j$ and
each $Z_i$ is normal (see \cite[1.5--1.6]{Kawamata1997}).

Suppose that $X$ is a variety with only Kawamata log terminal singularities
(in particular, this includes the assumptions that $X$ is normal and
the Weil divisor $K_X$ is
$\Q$-Cartier). A \emph{$G$-contraction}
is a $G$-equivariant proper morphism
$f:X\to Y$ onto a normal variety~$Y$ such that $f$
has connected fibers and $-K_X$ is $f$-ample
(thus $f$ is not only proper but projective).
The variety $X$ is called \emph{a $G$-Mori fiber space}
if $X$ is projective and there exists a \mbox{$G$-contraction}
$f:X\to Y$ with
$\dim(Y)<\dim(X)$
and the relative $G$-equivariant Picard number $\rho^G(X/Y)=1$.
Furthermore, if $Y$ is a point, then $X$ is called
a \emph{$G$-Fano variety}.

Suppose that $X$ is projective and $G\Q$-factorial,
i.\,e. any $G$-invariant $\Q$-divisor on $X$ is \mbox{$\Q$-Cartier}.
If $X$ is rationally connected (see Definition \ref{definition:RC}),
then one can run a $G$-Minimal Model Program on $X$,
as well as its relative versions,
and end up with a $G$-Mori fibre space.
This is possible due to~\cite[Corollary 1.3.3]{BCHM} and
\cite[Theorem~1]{MiyaokaMori}, since rational connectedness
implies uniruledness.
Actually, \cite{BCHM} treats the case when $G$ is trivial,
but adding a finite group action does not make a big difference.

\medskip

We start with proving some auxiliary statements
that will be used in course of the proof of
Theorem~\ref{theorem:RC-Jordan}.

Suppose that $V$ is a normal (irreducible) variety,
and $f:V\to W$ is a proper morphism. Then for any curve $C\subset V$ contracted
by $f$ and any Cartier divisor $D$ on $V$ one has a well-defined 
intersection index $D\cdot C$, and one can consider a (finite dimensional)
$\R$-vector space $\N_1(V/W)$ generated by the classes of curves 
in the fibers of $f$ modulo numerical equivalence 
(see e.\,g.~\cite[\S 0-1]{KMM}).

The following observation
(see e.\,g. the proofs of \cite[Theorem~1.10]{Kawamata1997} and
\cite[Theorem~1]{Kawamata-1998})
is sometimes called \emph{the perturbation
trick}.

\begin{lemma}
\label{lemma:perturbation-trick}
Let $V$ be an irreducible normal quasi-projective
variety, 
and $f:V\to W$ be a proper morphism
to a variety $W$. Let $D$ be an effective $\Q$-Cartier $\Q$-divisor
on $V$ such that
the log pair $(V, D)$ is strictly log canonical.
Suppose that a finite group $G$ acts on $V$ so that $D$ is
$G$-invariant.
Let $Z\subset X$ be a minimal
\mbox{$G$-center} of non-klt singularities of the log pair $(V, D)$.

Choose $\varepsilon>0$ and
a compact subset \mbox{$\K\subset\N_1(V/W)$}.
Then there exists
a $G$-invariant $\Q$-Cartier $\Q$-divisor
$D'$ such that
\begin{itemize}
\item the only centers of non-klt singularities of the log pair
$(V, D')$ are the irreducible components of $Z$;
\item for any $\kappa\in\K$ one has
$|(D-D')\cdot \kappa|<\varepsilon$.
\end{itemize}
\end{lemma}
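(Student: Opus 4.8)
The plan is to prove this by a $G$-equivariant version of the classical tie-breaking (perturbation) argument behind the cited results of Kawamata. First I would pass to a $G$-equivariant log resolution $\pi\colon\tilde V\to V$ of the pair $(V,D)$, which exists in characteristic~$0$ (one may average a chosen resolution over $G$, or invoke functorial resolution). Writing $K_{\tilde V}=\pi^*(K_V+D)+\sum_i a_iE_i$, the centers of non-klt singularities of $(V,D)$ are exactly the images $\pi(E_i)$ of the log canonical places, i.e.\ the prime divisors $E_i$ with $a_i=-1$, while the remaining $E_i$ have $a_i>-1$. Since $(V,D)$ is strictly log canonical such places exist, and the components $Z_i$ of $Z$ are among the $\pi(E_i)$. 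The goal is to modify $D$ into a $G$-invariant, $\Q$-Cartier $D'$ (necessarily by a $\Q$-Cartier perturbation, so that $K_V+D'$ stays $\Q$-Cartier without assuming $K_V$ is) whose only surviving log canonical places lie over the $Z_i$.

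The mechanism I would use is to first raise all discrepancies above $-1$ and then reintroduce the singularity precisely along $Z$. Concretely, I would subtract a small multiple $\delta A_0$ of a $G$-invariant effective ample $\Q$-Cartier divisor $A_0$ passing through \emph{every} non-klt center; since $A_0$ meets all these centers, $a(V,D-\delta A_0;E)=-1+\delta\,\mathrm{mult}_E\pi^*A_0>-1$, so the pair becomes Kawamata log terminal near all of them (a suitable effective representative being chosen to keep the boundary effective). Next, fixing a $G$-linearized ample line bundle $L$ on $V$ — available since $V$ is quasi-projective and $G$ is finite, by \cite[Theorem~3]{Sumihiro-1974} — I would pick a general $G$-invariant effective divisor $\Theta$ in a large multiple of $L$ that vanishes along $Z$ but contains none of the other non-klt centers. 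This is possible precisely because $Z$ is a \emph{minimal} $G$-center: no other center lies inside $Z$, so a general $G$-invariant section of $\mathcal{I}_Z\otimes mL$ cuts out a divisor through $Z$ avoiding the generic points of all remaining centers. Setting $D'=D-\delta A_0+c\,\Theta$ and increasing $c$ from $0$, the discrepancies of the places over $Z$ descend back toward $-1$, while those of the other centers are untouched because $\Theta$ misses them; the critical value $c=c(\delta)$ at which the places over $Z$ first reach $-1$ is a log canonical threshold, and at that value the only non-klt centers of $(V,D')$ are the components of $Z$. Here $G$-invariance of $\Theta$ is essential: it forces all $G$-conjugate components $Z_i$ to reach discrepancy $-1$ at the \emph{same} coefficient $c$, and together with the normality and pairwise disjointness of the $Z_i$ (\cite[1.5--1.6]{Kawamata1997}) and a Bertini-type genericity of $\Theta$ it guarantees that every component is retained and that no spurious center is created.

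For the numerical estimate I would exploit that the entire perturbation lies in an arbitrarily small ample class. Restoring $Z$ only requires matching an $O(\delta)$ gap in discrepancy, and $\Theta$ has multiplicity of order $1$ along $Z=\pi(E)$, so $c(\delta)=O(\delta)$; hence $D-D'=\delta A_0-c(\delta)\,\Theta$ lies in a class of the form $s\,[L]$ with $s=O(\delta)$. Since $\mathcal{K}\subset\N_1(V/W)$ is compact, the supremum $\sup_{\kappa\in\mathcal{K}}|L\cdot\kappa|$ is finite, so choosing $\delta$ small enough forces $|(D-D')\cdot\kappa|<\varepsilon$ for all $\kappa\in\mathcal{K}$ simultaneously.

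The main obstacle is the coupling in the middle paragraph: arranging that the reintroduced non-klt locus is \emph{exactly} the set of components of $Z$ — no fewer and no extra centers — while keeping $c(\delta)$ small enough that the perturbation stays numerically negligible on $\mathcal{K}$, and doing all of this with $G$-invariant, $\Q$-Cartier, effective divisors. The delicate points are that the ample subtraction must genuinely raise the discrepancies of all competing places (hence $A_0$ must pass through every center) while preserving effectivity and $\Q$-Cartierness, and that the general $G$-invariant $\Theta$ must be singular enough along each $Z_i$ to pin the threshold there yet avoid the generic points of the finitely many competing centers. The minimality of $Z$, together with the $G$-linearization of $L$, is exactly what makes both requirements achievable at once.
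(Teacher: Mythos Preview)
Your approach is a legitimate variant of the perturbation trick, but it runs in the opposite direction from the paper's. The paper first \emph{adds} a small multiple $\theta M$, where $M$ is the $G$-orbit sum of a general member of a very ample linear system with base locus $Z_1$; this makes $(V,D+\theta M)$ non-log-canonical precisely along the $Z_i$. It then scales the original boundary down, setting $D'=(1-\delta)D+\theta M$ with $\delta$ chosen so that $(V,D')$ is exactly log canonical; the only surviving non-klt centers are then the $Z_i$, and $\theta\ll 1$ forces $\delta\ll 1$, which gives the bound on $\K$. Your scheme instead first \emph{subtracts} $\delta A_0$ to reach klt and then adds $c\Theta$ to restore log-canonicity along $Z$. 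Both mechanisms exploit the minimality of $Z$ in exactly the way you describe, and both make $D-D'$ numerically small by letting the perturbation parameters tend to zero.

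The one genuine soft spot in your version is effectivity. Your parenthetical ``a suitable effective representative being chosen to keep the boundary effective'' does not explain how to subtract an ample divisor from a fixed effective $D$ and remain effective; in general one cannot. Effectivity is not optional here: the paper's definition of non-klt centers, and more importantly the downstream use of Kawamata subadjunction in Lemma~\ref{lemma:relative-subadjunction}, presuppose an effective boundary. The paper's ``add, then scale back'' order sidesteps this entirely, since $(1-\delta)D+\theta M$ is visibly effective. Your argument is easily repaired along the same lines --- replace the subtraction of $\delta A_0$ by the scaling $D\mapsto (1-\delta)D$, which already makes the pair klt once $V$ itself is klt (as in all the paper's applications) --- but as written this step is a gap. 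A more minor point: building $M$ as a $G$-orbit sum, as the paper does, is cheaper than producing a $G$-invariant $\Theta$ via a $G$-linearization of an ample line bundle.
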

\begin{proof}
Let $Z_1$,\ldots, $Z_r$ be irreducible components of $Z$.
Note that $Z_i$'s are disjoint by~\mbox{\cite[Proposition~1.5]{Kawamata1997}}.
Let $\mathscr M$ be a linear system of very ample
divisors such that $\operatorname{Bs}\mathscr M=Z_1$
and let $M_1\in \mathscr M$ be a general element.
Let $M_1,\ldots, M_l$ be the $G$-orbit of~$M_1$, and let~\mbox{$M=\sum M_i$}.
For $0<\theta \ll 1$ the subvariety $Z_1$ is the only center of
non log canonical singularities for the log pair $(V, D+\theta M_1)$.
Hence the only centers of non log canonical
singularities for $(V, D+\theta M)$
are the subvarieties $Z_i$. Now take $\delta\in\Q_{{}>0}$ so that
the log pair $(V, D')$ is strictly log canonical,
where $D'=(1-\delta)D+\theta M$.
By the above the only centers of non-klt
singularities of $(V, D')$ are $Z_i$'s.
Since $\theta\ll 1$, one has~\mbox{$\delta\ll 1$}, 
which guarantees the existence 
of an appropriate~$\varepsilon$.
\end{proof}

\begin{remark}\label{remark:perturbation-trick-generalizations}
One can generalize Lemma~\ref{lemma:perturbation-trick}
assuming that we start from a log pair that includes any formal linear
combination of linear systems on the variety $V$ with rational coefficients
instead of a divisor $D$, and produce an effective $\Q$-divisor $D'$.
Another version of the same assertion produces a movable linear system
$\mathcal{D}'$ instead of a divisor $D'$. Note that neither Lemma~2.2
nor these generalizations require the morphism~$f$ to be equivariant with
respect to the group~$G$. 
\end{remark}

We will need the following Bertini-type statement.

\begin{lemma}[{cf.~\cite[Theorem~1.13]{Reid-1980can}}]
\label{lemma:Bertini}
Let $Z$ be a normal variety and $D$ be an effective $\Q$-divisor
on $Z$ such that the log pair $(Z, D)$ is Kawamata
log terminal. Let $\mathscr M$ be a base point free linear system and
let $M\in \mathscr M$ be a general member. Then
\begin{itemize}
\item the variety $M$ is normal;
\item the log pair $(M, D|_M)$ is Kawamata log terminal.
\end{itemize}
\end{lemma}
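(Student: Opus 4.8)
The plan is to reduce everything to a log resolution of the pair $(Z,D)$, restrict it to the general member, and keep track of discrepancies, using Bertini's theorem in characteristic~$0$ for transversality and the fact that klt singularities are rational for normality. First I would fix a log resolution $\pi\colon\tilde Z\to Z$ of $(Z,D)$ and write
$$K_{\tilde Z}=\pi^*(K_Z+D)+\textstyle\sum_j b_j F_j,$$
where the $F_j$ run through the $\pi$-exceptional prime divisors together with the components of the strict transform of $D$. The hypothesis that $(Z,D)$ is Kawamata log terminal means exactly that $b_j>-1$ for every $j$ (the coefficients attached to the strict transform of $D$ are minus the coefficients of $D$, which are $<1$). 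Since $\mathscr M$ is base point free, so is the pulled-back system $\pi^*\mathscr M$; hence by Bertini its general member $\tilde M=\pi^*M$ is smooth, contains no $F_j$ (so it agrees with the strict transform of $M$), and meets the simple normal crossing divisor $\sum_j F_j$ transversally, so that $\tilde M+\sum_j F_j$ is again simple normal crossing.

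Next I would establish that $M$ is normal. Since $(Z,D)$ is klt, the variety $Z$ is itself klt, hence has rational singularities, so $R^1\pi_*\O_{\tilde Z}=0$. Applying $\pi_*$ to the ideal sheaf sequence of $\tilde M=\pi^*M$ in $\tilde Z$ and using the projection formula together with $\pi_*\O_{\tilde Z}=\O_Z$ and this vanishing, one obtains $\pi_*\O_{\tilde M}=\O_M$. As $\tilde M$ is smooth and the induced morphism $\psi=\pi|_{\tilde M}\colon\tilde M\to M$ is proper and birational (it is an isomorphism over the dense open $M\cap Z_{\mathrm{sm}}$, which $M$ meets in a dense subset because $Z\setminus Z_{\mathrm{sm}}$ has codimension at least two), this identity forces $M$ to be normal.

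With $M$ normal I would then carry out the discrepancy bookkeeping. By the Bertini step, $\psi\colon\tilde M\to M$ is a log resolution of $(M,D|_M)$. Adjunction $K_{\tilde M}=(K_{\tilde Z}+\tilde M)|_{\tilde M}$, combined with $\tilde M=\pi^*M$ and the adjunction formula $(K_Z+M)|_M=K_M$ on the normal Cartier divisor $M$, yields
$$K_{\tilde M}=\psi^*(K_M+D|_M)+\textstyle\sum_j b_j\,(F_j\cap\tilde M).$$
Since transversality guarantees that the $F_j\cap\tilde M$ are precisely the relevant prime divisors on $\tilde M$ (the $\psi$-exceptional ones and the strict transforms of the components of $D|_M$) and every $b_j>-1$, the pair $(M,D|_M)$ is Kawamata log terminal.

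I expect the main obstacles to lie in the second and first steps rather than the last. The genuine input for normality is that klt implies rational singularities, giving the vanishing $R^1\pi_*\O_{\tilde Z}=0$ that converts $\pi_*\O_{\tilde M}=\O_M$ into normality; and the transversality assertion in Bertini's theorem is exactly what ensures that the restricted morphism $\psi$ remains a log resolution, so that the discrepancies $b_j$ transfer unchanged to the slice. Once these are in place, the klt verification is a routine restriction of the discrepancy formula.
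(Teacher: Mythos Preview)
Your proof is correct and takes a genuinely different route from the paper's. The paper argues in three lines by quoting black boxes: since $(Z,D)$ is klt and $\mathscr M$ is base point free, the pair $(Z,D+\mathscr M)$ is purely log terminal; hence $(Z,D+M)$ is purely log terminal for general $M\in\mathscr M$; and then inversion of adjunction yields simultaneously that $M$ is normal and that $(M,D|_M)$ is klt. You instead work directly on a fixed log resolution: Bertini gives transversality of the pulled-back general member $\tilde M$, rationality of klt singularities gives $R^1\pi_*\O_{\tilde Z}=0$ and hence $\pi_*\O_{\tilde M}=\O_M$, which forces $M$ normal, and then the discrepancy formula restricts verbatim to $\tilde M$ via adjunction. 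Your approach is more hands-on and avoids the plt formalism and inversion of adjunction, at the cost of invoking the (also non-trivial) fact that klt implies rational singularities; the paper's approach is shorter but packages the same work into heavier citations. One small imprecision worth fixing: a log resolution of $(Z,D)$ need not be an isomorphism over all of $Z_{\mathrm{sm}}$ (it may blow up smooth centers to make $D$ simple normal crossing), so the birationality of $\psi=\pi|_{\tilde M}$ should be argued over the dense open where $\pi$ is an isomorphism rather than over $Z_{\mathrm{sm}}$; this changes nothing substantive.
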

\begin{proof}
Doing everything componentwise, we may assume that $Z$
is connected. Since $Z$ is normal, it is irreducible.
The pair $(Z,D+\mathscr M)$ is purely log terminal
(see Definition~4.6 and Lemma~4.7.1 in~\cite{Kollar-1995-pairs}).
Hence $(Z,D+M)$ is also purely log terminal
(see \cite[Theorem~4.8]{Kollar-1995-pairs}).
Thus by the inversion of adjunction
(see e.g. \cite[5.50--5.51]{Kollar-Mori-1988}) the variety~$M$
is normal and the pair
$(M,D|_{M})$ is Kawamata log terminal.
\end{proof}

The following is a relative version of the usual Kawamata subadjunction
theorem.

\begin{lemma}\label{lemma:relative-subadjunction}
Let $V$ be an irreducible normal quasi-projective variety,
and $D$ be an effective $\Q$-divisor
on $V$ such that
the log pair $(V, D)$ is strictly log canonical.
Let $W$ be a normal quasi-projective
variety, and $f:V\to W$ be a proper morphism
with connected fibers such that
$-(K_V+D)$ is $f$-ample.

Suppose that $G$ is a finite group acting on $V$.
Let $Z\subset V$ be a minimal
$G$-center of non-klt singularities of the log pair $(V, D)$,
and~\mbox{$T=f(Z)\subset W$}.
Let $Z_t=Z\cap f^{-1}(t)$ be a fiber of $f\vert_Z$ over a general point
$t\in T$.
Then
\begin{itemize}
\item $Z_t$ is normal;
\item $Z_t$ is irreducible;
\item there exists an effective
$\Q$-divisor $D_{Z}$ on $Z$ such that $K_{Z}+D_{Z}$
is $\Q$-Cartier, the log pair $(Z_t, D_Z\vert_{Z_t})$
is Kawamata log terminal and
$$
K_{{Z}_t}+D_{{Z}\vert_{Z_t}}\qlin (K_{{V}}+{D})\vert_{{Z}_t}.
$$
\end{itemize}
\end{lemma}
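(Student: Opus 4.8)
The plan is to reduce the relative subadjunction statement to the absolute Kawamata subadjunction theorem applied to a general fiber $Z_t$. The key idea is that "minimal $G$-center" is a componentwise notion, so after passing to one irreducible component $Z_1$ of $Z$ (which is a genuine minimal center of non-klt singularities of $(V,D)$ in the usual sense, and is normal by the cited results of Kawamata), the fiber $Z_t$ over a general point $t\in T$ lies in a single component and we can work with the restriction of the log structure to the fiber $V_t=f^{-1}(t)$.

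\emph{First} I would restrict the whole situation to a general fiber. Since $f$ is proper with connected fibers and $-(K_V+D)$ is $f$-ample, over a general point $t\in T$ the restriction $(V_t, D|_{V_t})$ is again strictly log canonical with $Z_t$ a minimal center of non-klt singularities, and $K_{V_t}+D|_{V_t}\qlin (K_V+D)|_{V_t}$ is anti-ample (in fact, since $t$ is a general point of a positive-dimensional base $T$ the relevant class may be taken numerically trivial on $V_t$ after the restriction, but in any case the $f$-ampleness is exactly what lets us invoke subadjunction in the relative setting). The normality and irreducibility of $Z_t$ for general $t$ then follow from the fact that $Z_1$ is normal and irreducible and $Z_t$ is a general fiber of the restricted morphism $Z_1\to T$, together with generic flatness and generic smoothness of the fibration; a general fiber of a dominant morphism from an irreducible variety is irreducible, and normality is preserved on general fibers.

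\emph{Next}, the core is to produce the boundary $D_Z$. Here I would invoke Kawamata's subadjunction theorem (\cite{Kawamata1997}, \cite{Kawamata-1998}) applied to the minimal center $Z_1$ of the log pair $(V,D)$, which produces an effective $\Q$-divisor $D_{Z_1}$ on $Z_1$ making $(Z_1, D_{Z_1})$ Kawamata log terminal and satisfying
$$
K_{Z_1}+D_{Z_1}\qlin (K_V+D)|_{Z_1}.
$$
The point is that subadjunction is normally stated in the absolute setting; the relative version is obtained by restricting both sides to the general fiber $Z_t$ and using that $\Q$-linear equivalence is preserved under restriction to a general fiber, together with the fact that the klt property of $(Z_1, D_{Z_1})$ restricts to the klt property of $(Z_t, D_{Z_1}|_{Z_t})$ for general $t$ by Lemma~\ref{lemma:Bertini} applied to a very ample linear system through $t$ (or directly by generic klt-ness of a klt pair on fibers).

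\emph{The main obstacle} I expect is the genuinely relative nature of the subadjunction formula: Kawamata's theorem produces a $\Q$-divisor whose $\Q$-linear equivalence class is pinned down globally on $Z_1$, but we only control $-(K_V+D)$ as $f$-ample, not as globally ample, so the "log-Fano-type" input that drives subadjunction is only relative. The care needed is to run the subadjunction construction fiberwise, or equivalently to observe that after restriction to a general fiber the pair $(V_t, D|_{V_t})$ genuinely satisfies the hypotheses of absolute subadjunction (the numerical class $(K_V+D)|_{V_t}$ behaves well on the general fiber), and then to check that the resulting boundary divisors on the fibers assemble into a single $D_Z$ on $Z$ with the stated $\Q$-Cartier and $\Q$-linear-equivalence properties. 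Verifying that $K_Z+D_Z$ is $\Q$-Cartier and that the $\qlin$ relation holds on $Z_t$ rather than merely numerically is the delicate bookkeeping that must be handled with some version of the perturbation trick (Lemma~\ref{lemma:perturbation-trick}) to ensure $Z_t$ is the \emph{only} center, so that subadjunction applies cleanly.
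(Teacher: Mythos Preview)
Your proposal contains a genuine gap in the irreducibility step. The claim that ``a general fiber of a dominant morphism from an irreducible variety is irreducible'' is false: Stein factorization shows that the general fiber of $Z_1\to T$ can have several connected components whenever the finite part of the factorization has degree $>1$. Nothing you have said rules this out. In the paper, irreducibility of $Z_t$ is obtained only \emph{after} the perturbation trick has made $Z$ the entire non-klt locus of $(V,D)$, at which point the Nadel--Shokurov connectedness theorem (applied fiberwise, using precisely that $-(K_V+D)$ is $f$-ample) forces $Z_t$ to be connected. This is the place where the relative anti-ampleness hypothesis is genuinely used, and your argument never invokes it for this purpose.

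There is also a structural difference in how the reduction to absolute subadjunction is carried out. You propose to apply Kawamata's theorem globally on $Z_1\subset V$ and then restrict to $Z_t$; but $V$ is only quasi-projective and the positivity of $-(K_V+D)$ is only relative, so the hypotheses of the absolute subadjunction theorem are not met on the nose, and your ``assemble the fiberwise boundaries into a single $D_Z$'' sketch does not explain how to get a global $\Q$-Cartier divisor with the required $\Q$-linear equivalence. The paper instead slices $W$ by general hyperplane sections (using Lemma~\ref{lemma:Bertini} to preserve the singularity setup) until $T$ becomes a point; then $Z=Z_t$ is projective and the usual Kawamata subadjunction applies directly. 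This buys a clean application of the standard theorem without any fiberwise patching.

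In summary: fix the irreducibility argument by invoking connectedness of the non-klt locus over the base (after perturbation), and replace the global-subadjunction-then-restrict strategy by the paper's cut-down-the-base strategy, which is what makes the absolute theorem applicable.
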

\begin{proof}
By Lemma~\ref{lemma:perturbation-trick}
we may assume that $Z$ is the only $G$-center
of non-klt singularities of the log pair $(V, D)$.
Furthermore, since an intersection of centers of non-klt
singularities is again a center of non-klt singularities
(see~\cite[Proposition~1.5]{Kawamata1997}),
we conclude that each of the connected components
of $Z$ is irreducible, because otherwise
the pairwise intersections of irreducible components of $Z$
would be a (non-empty) union of $G$-centers of non-klt
singularities of the pair $(V,D)$.
Applying~\cite[Theorem~1.6]{Kawamata1997} to connected components
of $Z$, one obtains that $Z$ is normal
(note that connected components
of $Z$ are minimal centers of non-klt singularities of $(V, D)$).
Moreover, a general fiber~$Z_t$
is connected by the Nadel--Shokurov connectedness theorem
(see e.\,g.~\cite[Theorem~3.2]{Corti2000}).
Hence~$Z_t$ is irreducible.
 
To proceed we may drop the action of the group $G$ and
assume that $T$ is a point.
Indeed, let $W'\subset W$ be a general hyperplane section,
and $t\in W'$ be a general point
(which is the same as to choose $t$ to be a general point of $W$,
and then to choose a general hyperplane section $W'\ni t$).
Put $V'=f^{-1}(W')$.
By Lemma~\ref{lemma:Bertini} the variety~$V'$ is normal.
Let $\varphi: \tilde V\to V$
be a log resolution of $(V,D)$, and let $\tilde V'$ be the
proper transform of $V'$. Since~$V'$ is a general member
of a base point free
linear system, $\varphi$ is also a log resolution of the log pair~\mbox{$(V,D+V')$}.
Therefore, $\varphi$ induces a log resolution of $(V', D|_{V'})$.
This implies that the log pair $(V', D|_{V'})$
is log canonical and the irreducible components of
$Z'=Z|_{V'}$ are its minimal
centers of non-klt singularities.
Replacing $f:(V,D)\to W$ by 
$$f|_{V'}: (V', D|_{V'})\to W'$$
and repeating this process $\mathrm{codim}_{W}(T)$ times,
we get the situation where $T$ is a point, and~\mbox{$Z=Z_t$}
(in particular, $Z$ is projective, normal, and irreducible).

With these reductions done, we apply Kawamata's subadjunction theorem
(see e.\,g.~\cite[Theorem~1]{Kawamata-1998}
or~\cite[Theorem~1.2]{FujinoGongyo12})
to conclude that there exists an effective
$\Q$-divisor $D_{{Z}}$ on ${Z}$ such that $K_{{Z}}+D_{{Z}}$
is $\Q$-Cartier, the log pair $(Z, D_Z)$ is Kawamata log terminal and
$$K_{{Z}}+D_{{Z}}\qlin (K_{{V}}+{D})\vert_{{Z}}.$$
\end{proof}

\begin{remark}
A usual form of the Kawamata's subadjunction theorem
(as in~\cite{Kawamata-1998} and~\cite{FujinoGongyo12})
requires the ambient variety to be projective.
Therefore, if one wants to be as accurate
as possible, the end of the proof of Lemma~\ref{lemma:relative-subadjunction}
should be read as follows. Assuming that~$T$ is a point,
we know that $Z$ is projective; as above, we can also suppose that
$Z$ is the only center of non-klt singularities of $(V, D)$.
Taking a log canonical closure
$(\bar{V}, \bar{D})$ of the log pair $(V, D)$ as in~\cite[Corollary~1.2]{HaconXu},
we see that $Z$ is still a minimal center of
non-klt singularities of the new pair $(\bar{V}, \bar{D})$,
and all other centers of
non-klt singularities of $(\bar{V}, \bar{D})$
are disjoint from $Z$. Now~\cite[Theorem~1.2]{FujinoGongyo12}
implies the assertion of Lemma~\ref{lemma:relative-subadjunction}.
Since this step is more or less obvious, we decided not to
include it in the proof to save space (and readers attention) for more essential
points.

Another interesting moment in the proof of
Lemma~\ref{lemma:relative-subadjunction} that we want to emphasize
is that we do not care about the action of the group $G$ anywhere apart from
the equivariant perturbation trick at the very beginning
(in particular, the morphism~$f$ is not required to be $G$-equivariant,
cf.~Remark~\ref{remark:perturbation-trick-generalizations}).
On the other hand, it seems that one cannot replace this
$G$-perturbation by a non-equivariant perturbation
performed at some later step, since otherwise we would not know
that the fiber $Z_t$ is connected, and thus it would
remain undecided if we have occasionally got
rid of some components of $Z$ or not.
This is crucial for us, since we are going to obtain
a $G$-invariant subvariety $Z$ with controllable fibers.
\end{remark}

\section{Rationally connected subvarieties}
\label{section:RC}
In this section we develop techniques to ``pull-back''
invariant rationally connected subvarieties under
contractions appearing in the Minimal Model Program.
Basically we follow the ideas of~\cite{HaconMcKernan07}.

Recall the following standard definitions.

\begin{definition}[{see e.\,g.~\cite[Lemma-Definition 2.6]{Prokhorov-Shokurov-2009}}]
A (normal irreducible) variety $X$ is called a \emph{variety of Fano type}
if there exists an effective $\Q$-divisor $\Delta$ on $X$ such that
the pair $(X,\Delta)$ is Kawamata log terminal and
$-(K_X+\Delta)$ is nef and big.
\end{definition}

\begin{definition}[{see e.\,g.~\cite[\S IV.3]{Kollar-1996-RC}}]
\label{definition:RC}
An irreducible variety $X$ is called \emph{rationally connected}
if for two general
points $x_1, x_2\in X$ there exists a rational map
\mbox{$t:C\dasharrow X$}, where~$C$ is a rational curve, such that the image
$t(C)$ contains $x_1$ and $x_2$.
\end{definition}

In particular, a point is a rationally connected variety.
Furthermore, rational connectedness is birationally
invariant, and an image of a rationally connected
variety under any rational map is again rationally connected.

The following is an easy consequence
of Lemma~\ref{lemma:relative-subadjunction}.

\begin{lemma}\label{lemma:RC-fibers}
Let $f:V\to W$ be a $G$-contraction
from a quasi-projective variety $V$ with Kawamata log terminal singularities.
Choose an effective $G$-invariant $\Q$-Cartier $\Q$-divisor~$D_W$ on $W$,
and put $D=f^*D_W$.

Let $Z\subset V$ be a minimal
$G$-center of non-klt singularities of the log pair $(V, D)$,
and~\mbox{$T=f(Z)\subset W$}.
Let $Z_t=Z\cap f^{-1}(t)$ be a fiber of $f\vert_Z$ over a general point
$t\in T$. Then~$Z_t$ is a variety of Fano type.
In particular, $Z_t$ is rationally connected.
\end{lemma}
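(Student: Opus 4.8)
The plan is to feed the hypotheses directly into the relative subadjunction result of Lemma~\ref{lemma:relative-subadjunction} and then recognize the resulting fiber as a variety of Fano type, from which rational connectedness follows by a standard theorem. First I would verify that the setup of the present lemma matches the hypotheses of Lemma~\ref{lemma:relative-subadjunction}. Here $V$ is quasi-projective with Kawamata log terminal singularities, so in particular $V$ is normal and $K_V$ is $\Q$-Cartier; the divisor $D=f^*D_W$ is an effective $G$-invariant $\Q$-Cartier $\Q$-divisor since $D_W$ is such on $W$ and $f$ is a $G$-contraction. I would point out that $(V,D)$ being strictly log canonical with $Z$ a minimal $G$-center of non-klt singularities is exactly the standing assumption. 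The one point that needs a brief remark is the $f$-ampleness of $-(K_V+D)$: since $f$ is a $G$-contraction, $-K_V$ is $f$-ample by definition, and $D=f^*D_W$ is numerically trivial on the fibers of $f$, so $-(K_V+D)\equiv -K_V$ relative to $f$ is $f$-ample, as required.

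Granting this, Lemma~\ref{lemma:relative-subadjunction} produces an effective $\Q$-divisor $D_Z$ on $Z$ such that $K_Z+D_Z$ is $\Q$-Cartier, the pair $(Z_t, D_Z\vert_{Z_t})$ is Kawamata log terminal, and
$$
K_{Z_t}+D_Z\vert_{Z_t}\qlin (K_V+D)\vert_{Z_t}.
$$
The key computation is now to evaluate the right-hand side. Because $D=f^*D_W$ and $Z_t$ lies in a single fiber $f^{-1}(t)$, the restriction $D\vert_{Z_t}=(f^*D_W)\vert_{Z_t}$ is trivial: $Z_t$ is contracted to the point $t$, so $f^*D_W$ restricts to the pullback of a divisor on a point, which is numerically (indeed $\Q$-linearly) trivial on $Z_t$. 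Hence $(K_V+D)\vert_{Z_t}\qlin K_V\vert_{Z_t}$. Moreover $-K_V$ is $f$-ample, so its restriction $-K_V\vert_{Z_t}$ to the fiber component $Z_t$ is ample. Combining these, $-(K_{Z_t}+D_Z\vert_{Z_t})$ is ample (in particular nef and big), and $(Z_t, D_Z\vert_{Z_t})$ is Kawamata log terminal. This is precisely the defining condition for $Z_t$ to be a variety of Fano type, with $\Delta=D_Z\vert_{Z_t}$.

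Finally, a variety of Fano type is rationally connected: this is the standard result of Zhang and of Hacon--McKernan that log Fano varieties (klt pairs with $-(K+\Delta)$ nef and big) are rationally connected, so the last sentence of the lemma follows immediately. The main obstacle I anticipate is not any of these steps individually but making the ampleness bookkeeping precise: one must be careful that the $\Q$-linear triviality of $f^*D_W$ on the fiber and the $f$-ampleness of $-K_V$ genuinely descend to statements about $Z_t$ rather than about the whole fiber $f^{-1}(t)$, and that the normality and irreducibility of $Z_t$ supplied by Lemma~\ref{lemma:relative-subadjunction} are what let us treat $Z_t$ as an honest irreducible variety of Fano type. Once the restriction $(K_V+D)\vert_{Z_t}\qlin K_V\vert_{Z_t}\sim_{\Q} -\text{(ample)}$ is pinned down, the rest is a direct appeal to the cited definitions and the rational connectedness of log Fano varieties.
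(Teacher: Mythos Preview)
Your proposal is correct and follows essentially the same route as the paper: apply Lemma~\ref{lemma:relative-subadjunction} to obtain the klt pair $(Z_t,D_Z\vert_{Z_t})$ with $K_{Z_t}+D_Z\vert_{Z_t}\qlin (K_V+D)\vert_{Z_t}$, observe that $D\vert_{Z_t}$ is trivial and $-K_V\vert_{Z_t}$ is ample, conclude that $Z_t$ is of Fano type, and then cite Zhang or Hacon--McKernan for rational connectedness. You even spell out the verification that $-(K_V+D)$ is $f$-ample (needed to invoke Lemma~\ref{lemma:relative-subadjunction}), which the paper leaves implicit.
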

\begin{proof}
By Lemma~\ref{lemma:relative-subadjunction}
a general fiber $Z_t$ is a normal irreducible variety
(so that we may assume $\dim(Z_t)>0$), and there exists an effective
$\Q$-divisor $D_{Z}$ on $Z$ such that $K_{Z}+D_{Z}$
is $\Q$-Cartier, the log pair $(Z_t, D_Z\vert_{Z_t})$ is Kawamata log terminal
and
$$K_{Z_t}+D_{Z}\vert_{Z_t}\qlin
(K_{V}+D)\vert_{Z_t}.$$

Since $Z_t$ is an irreducible variety such that 
the restriction of $D$ to $Z_t$ is trivial,
and the restriction of $-K_V$ to $Z_t$ is ample, we see that 
$Z_t$ is a variety of Fano type.
The last assertion of the lemma follows
from~\cite[Theorem~1]{Zhang-Qi-2006} 
or~\cite[Corollary~1.13]{HaconMcKernan07}.
\end{proof}

Now we are ready to prove the main technical result of this section.

\begin{lemma}[{cf.~\cite[Corollary~1.7(1)]{HaconMcKernan07}}]
\label{lemma:G-contraction}
Let $f:V\to W$ be a $G$-contraction
from a quasi-projective variety
with Kawamata log terminal singularities onto a quasi-projective variety~$W$.
Let \mbox{$T\subsetneq W$} be a $G$-invariant irreducible
subvariety. Then there exists a $G$-invariant (irreducible)
subvariety $Z\subsetneq V$ such that $f\vert_Z:Z\to T$ is
dominant and a general fiber of $f\vert_Z$ is rationally connected.
\end{lemma}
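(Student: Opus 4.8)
The plan is to produce the desired subvariety $Z\subset V$ as a minimal $G$-center of non-klt singularities of a cleverly chosen log pair, and then invoke Lemma~\ref{lemma:RC-fibers} to get the rational connectedness of the general fiber. The key is to arrange a $G$-invariant $\Q$-divisor whose non-klt locus maps \emph{onto} $T$ (so that $f|_Z$ is dominant) while keeping the fibers in control. First I would choose a $G$-invariant $\Q$-Cartier $\Q$-divisor $D_W$ on $W$ supported on $T$ that makes the pair $(W,D_W)$ badly singular precisely along $T$; concretely, take a $G$-invariant very ample linear system cutting out $T$ (averaging over the orbit if necessary), and scale it up by a large rational coefficient so that $T$ becomes a center of non log canonical singularities, then perturb the coefficient down so that the pair is strictly log canonical with $T$ (or a $G$-invariant union of components containing $T$) as a non-klt center. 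Since $T$ is $G$-invariant this whole construction can be done $G$-equivariantly.

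Next I would pull this back: set $D=f^*D_W$. Because $f$ is a $G$-contraction with $-K_V$ $f$-ample, the pullback interacts well with the singularities, and the non-klt centers of $(V,D)$ will dominate $T$. Let $Z$ be a minimal $G$-center of non-klt singularities of $(V,D)$ whose image under $f$ is $T$; such a center exists since any non-klt center of $(V,D)$ maps into the non-klt locus of $(W,D_W)$, i.e. into $T$, and minimality can be arranged within the collection of centers dominating $T$. Then $f|_Z:Z\to T$ is dominant by construction, and $Z\subsetneq V$ because $T\subsetneq W$ forces $Z$ to lie over a proper subvariety. Applying Lemma~\ref{lemma:RC-fibers} directly (its hypotheses are exactly that $Z$ is a minimal $G$-center of non-klt singularities for $D=f^*D_W$, with $D_W$ effective, $G$-invariant and $\Q$-Cartier) yields that a general fiber $Z_t$ is of Fano type, hence rationally connected.

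The main obstacle I expect is the very first step: guaranteeing that the minimal $G$-center $Z$ actually \emph{dominates} $T$ rather than collapsing to a smaller subvariety of $W$, and simultaneously that the pair $(V,D)$ is strictly log canonical so that the machinery of Section~\ref{section:preliminaries} applies. The danger is that scaling $D_W$ produces non-klt centers lying over proper subsets of $T$, or that the pullback $f^*D_W$ creates unexpected centers in fibers that do not project onto $T$. Controlling this requires choosing the coefficient on the $T$-supported system so that the log canonical threshold is attained generically along $T$; the perturbation trick of Lemma~\ref{lemma:perturbation-trick} then lets me clean up the center structure so that $Z$ is the unique minimal $G$-center and its components are disjoint and normal. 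One must also check that $-(K_V+D)$ remains $f$-ample (needed so that the subadjunction input to Lemma~\ref{lemma:RC-fibers} is valid), which holds because $D=f^*D_W$ has trivial restriction to fibers, so $-(K_V+D)|_{\text{fiber}}=-K_V|_{\text{fiber}}$ stays ample. Verifying these positivity and genericity conditions carefully is where the real work lies, but once they are in place the conclusion follows immediately from Lemma~\ref{lemma:RC-fibers}.
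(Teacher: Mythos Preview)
Your outline is close to the paper's argument, but the step you flag as the ``main obstacle'' is genuinely a gap, and the fix you propose does not work. You want a minimal $G$-center $Z$ of $(V,D)$ that dominates $T$; the problem is that once you scale $D=f^*D_W$ to the log canonical threshold, there may be strictly smaller $G$-centers lying over proper closed subsets of $T$, and Lemma~\ref{lemma:perturbation-trick} cannot promote a non-minimal center to a minimal one --- it only isolates a center that is \emph{already} minimal. So invoking the perturbation trick here does not resolve the issue.

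The paper's remedy is to compute the log canonical threshold $c$ of $(V,D)$ over a \emph{general} point of $T$, then excise the bad locus: let $S$ be the union of those non-klt centers of $(V,cD)$ that fail to dominate $T$, set $W^o=W\setminus f(S)$ and $V^o=f^{-1}(W^o)$. By the definition of $c$ there is a center $Z_1$ dominating $T$; taking its $G$-orbit $Z$ and restricting to $V^o$, one finds that $Z\cap V^o$ is now a minimal $G$-center of $(V^o,cD\vert_{V^o})$, because all competing centers have been thrown away with $S$. Lemma~\ref{lemma:RC-fibers} then applies on $V^o$, which suffices since a general $t\in T$ lies in $W^o$. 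A minor additional point: you speak of $D_W$ ``supported on $T$'' and of controlling the singularities of $(W,D_W)$, but $T$ may have codimension $\ge 2$ in $W$ and $W$ is not assumed to have $\Q$-Gorenstein singularities; the paper avoids this by choosing general members of a very ample system with base locus $T$ on $W$ and working exclusively with the pair $(V,f^*D_W)$ upstairs.
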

\begin{proof}
Take $k\gg 0$, and choose $H_1,\ldots,H_k$ to be general divisors
from some (very ample) linear system $\mathcal{H}$ with $\Bs\mathcal{H}=T$.
Adding the images of the divisors $H_i$ under the action
of~$G$ to the set $\{H_1,\ldots,H_k\}$ we may assume that this set
is $G$-invariant. Put \mbox{$D_W=\sum H_i$}, and $D=f^*D_W$. Let $c$ be
the log canonical threshold of the log pair $(V, D)$ over a general point
of $T$ (this makes sense
since the log canonical threshold in a neighborhood of a point $P\in V$
is an upper semi-continuous
function of $P$, and $T$ is irreducible).
Note that we can assume
that for any center $L$ of non-klt singularities of $(V, cD)$ one has
$f(L)\subset T$ by the construction of $D$.

Let $S$ be a union of all centers of non-klt singularities
of the log pair $(V, cD)$ that
\emph{do not} dominate $T$.
Then $T$ is not contained in the set $f(S)$.
Indeed, the union $\mathcal{Z}$ of centers of non-klt singularities
of $(V, cD)$ is a union of a finite number
of centers of \mbox{non-klt} singularities of $(V, cD)$ by
Remark~\ref{remark:finite-number}.
By definition of $c$ we conclude that
the log pair~\mbox{$(V, cD)$} has a center of non-klt singularities
$Z_1$ that dominates~$T$.

Let $Z_1, \ldots, Z_r$ be the $G$-orbit of the subvariety $Z_1$, and put
$Z=\bigcup Z_i$.
Put \mbox{$W^o=W\setminus f(S)$} and $V^o=f^{-1}(W^o)$,
and note that $Z\cap V^o$ is a minimal $G$-center of non-klt singularities of
the log pair $(V^o, cD\vert_{V^o})$.
Lemma~\ref{lemma:RC-fibers} implies that the fiber $Z_t$ of
the morphism
$f\vert_Z$ over a general point $t\in T\cap W^o$ is rationally connected.
\end{proof}

\begin{remark}
In the case when $f$ is an isomorphism over a general point
of $T$ the proof of Lemma~\ref{lemma:G-contraction}
produces the strict transform of $T$ on $V$ as a
resulting subvariety $Z$.
\end{remark}

Rationally connected varieties enjoy the following important property
(see~\cite[Corollary~1.3]{Graber-Harris-Starr-2003} for the proof over~$\mathbb{C}$; the case
of an arbitrary field of characteristic~$0$ follows by the usual Lefschetz
principle).

\begin{theorem}
\label{theorem:GHS}
Let $f:X\to Y$ be a dominant morphism of proper
varieties over $\Bbbk$.
Assume that both $Y$ and a general fiber of $f$ are rationally connected.
Then $X$ is also rationally connected.
\end{theorem}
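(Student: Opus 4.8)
The plan is to reduce to the base field $\C$ by the Lefschetz principle, as indicated in the excerpt, and then to join two general points of $X$ by a chain of rational curves. First I would choose general points $x_1,x_2\in X$ and set $y_i=f(x_i)\in Y$; since the $x_i$ are general and $f$ is dominant, the $y_i$ are general points of $Y$, and each fiber $X_{y_i}=f^{-1}(y_i)$ is a general fiber of $f$, hence rationally connected, with $x_i$ a general point of $X_{y_i}$. As $Y$ is rationally connected, I would join $y_1$ and $y_2$ by an irreducible rational curve and, passing to its normalization, obtain a morphism $g\colon\P^1\to Y$ with $g(0)=y_1$ and $g(\infty)=y_2$.

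The key step is to pull back the family along $g$. Forming the fiber product and keeping the unique irreducible component $\mathcal X$ dominating $\P^1$, I obtain a proper morphism $\pi\colon\mathcal X\to\P^1$ together with a morphism $\mathcal X\to X$ over $Y$. For $g$ chosen general enough, its image meets the dense locus of $Y$ over which the fibers of $f$ are rationally connected, so the general fiber of $\pi$ is isomorphic to a general fiber of $f$ and is rationally connected. At this point I would invoke the Graber--Harris--Starr theorem in its basic form: a proper family over a smooth curve whose general fiber is rationally connected admits a section. This produces a section $\sigma\colon\P^1\to\mathcal X$ of $\pi$; composing with $\mathcal X\to X$ gives an irreducible rational curve $R\subset X$ meeting $X_{y_1}$ in a point $q_1$ and $X_{y_2}$ in a point $q_2$.

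Then I would finish by a fiberwise gluing. Since $X_{y_1}$ is rationally connected and both $x_1$ (general) and $q_1$ lie on it, I can join $x_1$ to $q_1$ by a rational curve inside $X_{y_1}$; similarly I join $x_2$ to $q_2$ inside $X_{y_2}$. Concatenating these two fiber curves with $R$ yields a connected chain of rational curves in $X$ joining $x_1$ to $x_2$. Thus two general points of $X$ are rationally chain connected; since rational connectedness is a birational invariant, I would pass to a smooth proper model and apply the standard smoothing-of-combs argument (see~\cite{Kollar-1996-RC}) to replace the chain by a single irreducible rational curve through both points, which is exactly the conclusion required by Definition~\ref{definition:RC}.

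The main obstacle is the Graber--Harris--Starr section theorem itself, which supplies the genuinely deep input: its proof requires producing very free rational curves inside the fibers, a delicate degeneration of the family to one over which a section manifestly exists, and a smoothing argument for combs attached along such a section. Reproving it from scratch is a substantial undertaking, so in practice I would black-box it via the cited \cite{Graber-Harris-Starr-2003}. A secondary, routine point is checking that $R$ meets the two prescribed fibers in usable points $q_i$ and that the resulting chain can be smoothed through the two general endpoints; both are standard once the section exists.
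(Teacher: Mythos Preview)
The paper does not actually prove this theorem: it merely cites \cite[Corollary~1.3]{Graber-Harris-Starr-2003} for the case $\Bbbk=\C$ and invokes the Lefschetz principle for general fields of characteristic~$0$. There is no argument given beyond that parenthetical remark. Your proposal therefore goes well beyond what the paper does; you are effectively sketching the standard derivation of the cited corollary from the Graber--Harris--Starr section theorem, which is exactly how it is obtained in \cite{Graber-Harris-Starr-2003}.

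Your outline is essentially correct, but one step is out of order. You first show that two general points of $X$ are joined by a chain of rational curves, and only then pass to a smooth proper model to smooth the chain. The difficulty is that rational \emph{chain} connectedness is not a birational invariant (a cone over an elliptic curve is rationally chain connected, its resolution is not), so you cannot transport the chain-connectedness conclusion to the smooth model after the fact. The fix is routine: replace $X$ at the outset by a smooth projective model $\tilde X$ equipped with a morphism to $Y$ (Chow's lemma plus resolution and elimination of indeterminacy); the general fiber of $\tilde X\to Y$ is birational to that of $f$, hence still rationally connected. Run your entire argument on $\tilde X$, obtain rational chain connectedness of $\tilde X$, then smooth there (where smoothness is available), and conclude by birational invariance of rational connectedness that $X$ is rationally connected. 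With this reordering the sketch is sound, and the deep input, as you correctly identify, is the section theorem from \cite{Graber-Harris-Starr-2003}.
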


Together with the previous considerations this
enables us to lift $G$-invariant rationally connected varieties
via $G$-contractions.
Namely, the following immediate
consequences of Lemma~\ref{lemma:G-contraction}
and Theorem~\ref{theorem:GHS}
will be used in the proof of Theorem~\ref{theorem:RC-Jordan}.

\begin{corollary}\label{corollary:G-contraction}
Let $f:V\to W$ be a $G$-contraction
from a quasi-projective variety
with Kawamata log terminal singularities onto a quasi-projective variety~$W$.
Let $T\subsetneq W$ be a $G$-invariant rationally connected
subvariety. Then there exists a $G$-invariant rationally connected
subvariety $Z\subsetneq V$ that dominates $T$.
\end{corollary}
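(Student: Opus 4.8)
The plan is to combine Lemma~\ref{lemma:G-contraction} with Theorem~\ref{theorem:GHS} in a nearly mechanical way, since Corollary~\ref{corollary:G-contraction} is advertised as an immediate consequence of these two results. First I would apply Lemma~\ref{lemma:G-contraction} directly to the given $G$-contraction $f:V\to W$ and the $G$-invariant subvariety $T\subsetneq W$. This produces a $G$-invariant irreducible subvariety $Z\subsetneq V$ such that $f\vert_Z:Z\to T$ is dominant and a general fiber of $f\vert_Z$ is rationally connected. This $Z$ is exactly the candidate subvariety asserted by the corollary; all that remains is to verify that $Z$ is itself rationally connected.

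To establish rational connectedness of $Z$, I would invoke Theorem~\ref{theorem:GHS} applied to the dominant morphism $f\vert_Z:Z\to T$. By hypothesis $T$ is rationally connected, and by the conclusion of Lemma~\ref{lemma:G-contraction} a general fiber of $f\vert_Z$ is rationally connected as well. Theorem~\ref{theorem:GHS} then yields that $Z$ is rationally connected, provided its hypotheses are genuinely met.

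The main obstacle, and the only point requiring genuine care, is that Theorem~\ref{theorem:GHS} is stated for a dominant morphism of \emph{proper} varieties, whereas $V$, $W$, and hence $T$ and $Z$ are only quasi-projective in the setting of Lemma~\ref{lemma:G-contraction}. The fix is standard but must be flagged: I would pass to projective closures, or equivalently note that rational connectedness is a birational invariant (as recorded just after Definition~\ref{definition:RC}), so that replacing $Z$, $T$, and the morphism $f\vert_Z$ by a compactification does not affect the rational connectedness of the general fiber of $f\vert_Z$ nor of $T$, and one may therefore apply Theorem~\ref{theorem:GHS} to the compactified morphism. Since $Z$ is birational to its compactification, rational connectedness descends back to $Z$ itself. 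A second, minor subtlety is that Lemma~\ref{lemma:G-contraction} only guarantees that a general fiber over $T\cap W^o$ is rationally connected, but a general fiber of the compactified morphism lies over a general point of the compactification of $T$, and such a point may be taken inside $T\cap W^o$, so the two notions of ``general fiber'' agree.

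Finally I would assemble these observations into a short argument: apply Lemma~\ref{lemma:G-contraction} to obtain $Z$, compactify and apply Theorem~\ref{theorem:GHS} to conclude that a compactification of $Z$ is rationally connected, and then use birational invariance to deduce that $Z$ is rationally connected. Since $Z$ is $G$-invariant, irreducible, and a proper subvariety of $V$ by construction, this completes the proof. I do not expect any further difficulty, as the entire content of the corollary has been packed into the two cited results.
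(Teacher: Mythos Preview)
Your proposal is correct and matches the paper's own proof essentially line for line: apply Lemma~\ref{lemma:G-contraction}, then Theorem~\ref{theorem:GHS} to (a compactification of) $Z$, using birational invariance of rational connectedness to handle the properness hypothesis. The paper compresses this into two sentences, parenthetically noting ``a~desingularization of a compactification of $Z$'' where you spell out the compactification step more carefully; your extra remark on matching the two notions of general fiber is a valid clarification that the paper omits.
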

\begin{proof}
Apply Lemma~\ref{lemma:G-contraction}
to obtain a subvariety $Z\subsetneq V$ that maps to a rationally connected
variety $T$ with a rationally connected general fiber.
Theorem~\ref{theorem:GHS} applied to (a~desingularization of a
compactification of) $Z$ completes the proof.
\end{proof}

The following is just a small modification of
Corollary~\ref{corollary:G-contraction}, but we find it useful to state it
to have a result allowing us to lift rationally connected
subvarieties via (equivariant) flips.

\begin{corollary}\label{corollary:G-flip}
Let $f:V\to W$ be a $G$-contraction
from a quasi-projective variety
with Kawamata log terminal singularities onto a quasi-projective variety~$W$.
Consider a diagram of $G$-equivariant morphisms
$$
\xymatrix{
V\ar@{->}[rd]_{f}&&V'\ar@{->}[ld]^{f'}\\
&W&
}
$$
Suppose that there exists a $G$-invariant rationally connected
subvariety \mbox{$Z'\subset V'$}
such that~\mbox{$f'(Z')\neq W$}.
Then there exists a $G$-invariant rationally connected
subvariety $Z\subsetneq V$.
\end{corollary}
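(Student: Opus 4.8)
The plan is to reduce the statement to Corollary~\ref{corollary:G-contraction} by first finding a $G$-invariant rationally connected subvariety inside $W$ and then pulling it back along $f$. The starting data is a $G$-invariant rationally connected subvariety $Z'\subset V'$ whose image $T=f'(Z')\subsetneq W$ is a proper subvariety. First I would observe that $T$ is automatically $G$-invariant, since $Z'$ is $G$-invariant and $f'$ is $G$-equivariant, so $T$ is the image of a $G$-invariant set under an equivariant map. Moreover $T$ is rationally connected: it is the image of the rationally connected variety $Z'$ under the morphism $f'\vert_{Z'}$, and (as recalled just after Definition~\ref{definition:RC}) the image of a rationally connected variety under any rational map is again rationally connected. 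Thus $T\subsetneq W$ is a $G$-invariant rationally connected subvariety of $W$.

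With this $T$ in hand, I would simply invoke Corollary~\ref{corollary:G-contraction} for the $G$-contraction $f:V\to W$ and the subvariety $T$. That corollary produces a $G$-invariant rationally connected subvariety $Z\subsetneq V$ dominating $T$, which is exactly the conclusion we want (indeed it gives slightly more, namely that $Z$ dominates $T$). This is why the statement is called a \emph{small modification} of Corollary~\ref{corollary:G-contraction}: the only genuinely new input is the remark that a proper subvariety of $W$ can be extracted from the data on the right-hand side of the diagram, after which the left-hand contraction handles everything.

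The one point that requires a little care — and which I expect to be the only real obstacle — is verifying that $T$ is a \emph{proper} subvariety of $W$, i.e.\ that $T\subsetneq W$ rather than $T=W$. This is, however, precisely the hypothesis $f'(Z')\neq W$ that is built into the statement of the corollary, so no work is needed: the hypothesis is tailored exactly to guarantee that Corollary~\ref{corollary:G-contraction} applies. (Without this assumption one could not conclude that the lifted variety $Z$ is a proper subvariety of $V$, which is what makes the result useful for extracting a proper $G$-invariant rationally connected subvariety.) I would also note that the irreducibility conventions match up: Corollary~\ref{corollary:G-contraction} does not insist $T$ be irreducible, and the notion of rational connectedness for the $G$-orbit $T$ is understood componentwise in the usual way, so there is no friction in applying the earlier result to the possibly-reducible orbit produced here.
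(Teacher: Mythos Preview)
Your argument is correct and matches the paper's proof exactly: set $T=f'(Z')$, observe it is a $G$-invariant rationally connected proper subvariety of $W$, and apply Corollary~\ref{corollary:G-contraction}. Your final worry about irreducibility is moot: in the paper's conventions rational connectedness is defined only for irreducible varieties (Definition~\ref{definition:RC}), so $Z'$ is already irreducible and hence $T=f'(Z')$ is irreducible as well, with no componentwise reasoning needed.
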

\begin{proof}
Apply Corollary~\ref{corollary:G-contraction}
to the rationally connected variety $T=f'(Z')\subsetneq W$.
\end{proof}

Corollaries~\ref{corollary:G-contraction} and~\ref{corollary:G-flip}
imply the following assertion.

\begin{lemma}\label{lemma:G-MMP}
Let $V$ be a projective
variety with an action of a finite group $G$.
Suppose that~$V$ has Kawamata log terminal $G\Q$-factorial singularities.
Let $f:V\dasharrow W$ be a birational map that is a result of a
$G$-Minimal Model Program ran on $V$.
Let $F\subset G$ be a subgroup.
Suppose that there exists an $F$-invariant rationally connected
subvariety~\mbox{$T\subsetneq W$}.
Then there exists an $F$-invariant rationally connected
subvariety $Z\subsetneq V$.
\end{lemma}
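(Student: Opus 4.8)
The plan is to induct on the number of elementary steps (divisorial contractions and flips) in the $G$-Minimal Model Program that realizes the birational map $f\colon V\dasharrow W$. Each such step is a $G$-equivariant birational modification, and the two Corollaries at our disposal, namely Corollary~\ref{corollary:G-contraction} and Corollary~\ref{corollary:G-flip}, are precisely the tools designed to transport a $G$-invariant rationally connected subvariety backwards across one step. The key subtlety is that the statement is for an arbitrary subgroup $F\subset G$, not for $G$ itself; but this causes no trouble, since $F$ acts on $V$ and $W$ as well, every step of the $G$-MMP is automatically $F$-equivariant, and the two Corollaries hold for any finite group action. So the whole induction can be carried out with $F$ in place of $G$ throughout, and I would state it that way.

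First I would set up the induction. Write the $G$-MMP as a chain
$$
V=V_0 \dashrightarrow V_1 \dashrightarrow \cdots \dashrightarrow V_m=W,
$$
where each arrow $V_{i}\dashrightarrow V_{i+1}$ is either a $G$-divisorial contraction $V_i\to V_{i+1}$ or a $G$-flip fitting into a diagram over a common base. The base case $m=0$ is trivial, since then $V=W$ and we may take $Z=T$. For the inductive step, assuming we have produced an $F$-invariant rationally connected subvariety $T_1\subsetneq V_1$, I would pull it back one more step to an $F$-invariant rationally connected subvariety $Z\subsetneq V_0=V$, distinguishing the two types of elementary modification.

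For a divisorial contraction $g\colon V_0\to V_1$, this is a $G$-contraction in the sense of Section~\ref{section:preliminaries} (it is $G$-equivariant, proper, has connected fibers, and $-K_{V_0}$ is $g$-ample on the contracted locus, as guaranteed by the MMP), so Corollary~\ref{corollary:G-contraction}, applied with the group $F$, directly produces an $F$-invariant rationally connected $Z\subsetneq V_0$ dominating $T_1$. For a flip, the map $V_0\dashrightarrow V_1$ is an isomorphism in codimension one and sits in a diagram $V_0\to U\leftarrow V_1$ over a base $U$ with $G$-contractions on both sides; here the hypothesis $f'(T_1)\neq U$ holds because $\dim T_1<\dim V_1=\dim U$, so Corollary~\ref{corollary:G-flip}, again with $F$ in place of $G$, yields the desired $F$-invariant rationally connected $Z\subsetneq V_0$.

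The main point requiring care is verifying that each elementary step of the $G$-MMP genuinely satisfies the hypotheses of the Corollaries, in particular that the relevant morphisms are $G$-contractions with $-K$ ample in the fibre direction and that the normality and Kawamata log terminality of $V_i$ are preserved along the way; but these are exactly the properties that the $G$-equivariant MMP on a $G\mathbb{Q}$-factorial Kawamata log terminal variety guarantees, as recalled in Section~\ref{section:preliminaries}. The only genuinely new observation, beyond bookkeeping, is the dimension argument $\dim T_1<\dim U$ that feeds the flip case into Corollary~\ref{corollary:G-flip}; everything else is a clean descending induction along the chain of modifications.
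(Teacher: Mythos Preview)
Your proposal is correct and follows essentially the same approach as the paper: induction on the number of steps of the $G$-MMP, invoking Corollary~\ref{corollary:G-contraction} for divisorial contractions and Corollary~\ref{corollary:G-flip} for flips, together with the observation that every $G$-contraction is automatically an $F$-contraction. The paper's proof is the one-line version of exactly this; your write-up simply spells out the details (the dimension check $\dim T_1<\dim V_1=\dim U$ for the flip case, and the bookkeeping that klt and $G\mathbb{Q}$-factoriality persist along the MMP).
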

\begin{proof}
Induction in the number of steps of the $G$-Minimal Model Program
using Corollaries~\ref{corollary:G-contraction} and~\ref{corollary:G-flip}
(note that any $G$-contraction is also an $F$-contraction).
\end{proof}

In particular, Lemma~\ref{lemma:G-MMP} implies the following assertion
(it will not be used directly
in the proof of our main theorems, but still we suggest that
it deserves being mentioned).

\begin{proposition}\label{lemma:G-resolution}
Let $W$ be a quasi-projective variety
with terminal singularities acted on by a finite group $G$
so that $W$ is $G\Q$-factorial.
Let $f:V\to W$ be a $G$-equivariant resolution of singularities
of $W$.
Suppose that there exists a $G$-invariant rationally connected
subvariety $T\subsetneq W$.
Then there exists a $G$-invariant rationally connected
subvariety~\mbox{$Z\subsetneq V$}.
\end{proposition}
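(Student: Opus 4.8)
\emph{Proof proposal.}
The plan is to realize the resolution $f\colon V\to W$ as the outcome of a relative $G$-Minimal Model Program and then invoke Lemma~\ref{lemma:G-MMP} to transport the rationally connected subvariety $T$ from the target $W$ back to the source $V$. First I would record that, since $V$ is smooth, it has Kawamata log terminal (indeed terminal) singularities and is $\Q$-factorial, hence $G\Q$-factorial; moreover we may take the resolution $f$ to be projective, so that $V$ is projective over $W$ and relative programs are available.

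Next I would run the relative $G$-Minimal Model Program for the canonical class of $V$ over $W$. Because $W$ has terminal singularities, one has $K_V=f^{*}K_W+\sum_i a_i E_i$ with all $a_i>0$ and all $E_i$ being $f$-exceptional, so relatively over $W$ the canonical class is (linearly equivalent to) an effective $f$-exceptional divisor. Each step of the program is a $G$-equivariant contraction or flip of a ray on which the canonical class is negative; in particular each divisorial contraction is a $G$-contraction in the sense of Section~\ref{section:preliminaries} (its relative anticanonical class is ample and the relative $G$-equivariant Picard number is $1$), and each flip fits into the diagram of Corollary~\ref{corollary:G-flip}. By the negativity lemma the program must contract every $E_i$, and since $W$ is terminal and $G\Q$-factorial the program terminates precisely at $W$: the resulting birational map $V\dasharrow W$ is exactly $f$.

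Having exhibited $f$ as the result of a $G$-Minimal Model Program with input $V$ and output $W$, I would apply Lemma~\ref{lemma:G-MMP} with $F=G$: the $G$-invariant rationally connected subvariety $T\subsetneq W$ produces a $G$-invariant rationally connected subvariety $Z\subsetneq V$, which is exactly what is claimed. Since our varieties are only quasi-projective, rather than quoting Lemma~\ref{lemma:G-MMP} verbatim one simply repeats its inductive argument step by step, using Corollaries~\ref{corollary:G-contraction} and~\ref{corollary:G-flip}, which are stated for $G$-contractions of quasi-projective varieties and therefore apply to each step of the relative program.

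The main obstacle is the second step: checking that the relative $G$-Minimal Model Program terminates and that its output is $W$ itself rather than some nontrivial $G\Q$-factorial small modification of $W$. Termination in this birational-over-a-base setting is classical for resolutions of terminal varieties, and the identification of the output with $W$ follows from the $G\Q$-factoriality of $W$ together with the observation that, once all exceptional divisors have been contracted, any further contraction would be small and crepant and hence a $G$-isomorphism. The remaining verifications, namely that the MMP steps match the hypotheses of the corollaries of Section~\ref{section:RC}, are routine.
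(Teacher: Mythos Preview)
Your proposal is correct and follows essentially the same route as the paper: run a relative $G$-Minimal Model Program on $V$ over $W$, use the Negativity Lemma together with the $G\Q$-factoriality of $W$ to conclude that the output is $W$ itself, and then apply Lemma~\ref{lemma:G-MMP}. Your explicit remark that Lemma~\ref{lemma:G-MMP} is stated for projective $V$ while here $V$ is only quasi-projective, and that one therefore repeats its inductive argument using Corollaries~\ref{corollary:G-contraction} and~\ref{corollary:G-flip} (which are stated for quasi-projective varieties), is a careful point that the paper leaves implicit.
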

\begin{proof}
Run a relative $G$-Minimal Model Program
on $V$ over $W$
(this is possible due to an equivariant version
of~\cite[Corollary~1.4.2]{BCHM})
to obtain a variety $V_n$ that is a relatively minimal model over $W$ together
with a series of birational modifications
$$V=V_0\stackrel{f_1}\dashrightarrow
\ldots\stackrel{f_n}\dashrightarrow
V_n\stackrel{f_{n+1}}\longrightarrow W.$$
Then $f_{n+1}$ is
small by the Negativity Lemma (see e.\,g.~\cite[2.19]{Utah}).
Thus $G\Q$-factoriality of $W$ implies that $f_{n+1}$
is actually an isomorphism.
Now the assertion follows from Lemma~\ref{lemma:G-MMP}.
\end{proof}

\section{Jordan property}
\label{section:Jordan}

In this section we will prove Theorem~\ref{theorem:RC-Jordan}.
Before we proceed let us introduce the following notion.

\begin{definition}\label{definition:almost-fixed}
Let $\CC$ be some set of varieties.
We say that \emph{$\CC$ has almost fixed points}
if there is a constant $J=J(\CC)$ such that
for any variety $X\in\CC$ and for any finite
subgroup~\mbox{$G\subset\Aut(X)$} there exists
a subgroup $F\subset G$ of index at most $J$
acting on $X$ with a fixed point.
\end{definition}

Theorem~\ref{theorem:RC-Jordan}
will be implied by the following auxiliary result.

\begin{theorem}\label{theorem:RC-fixed-point}
Let $\RR(n)$ be the set of all rationally connected
varieties of dimension $n$. Assume that Conjecture~\ref{conjecture:BAB}
holds. Then $\RR(n)$ has almost fixed points.
\end{theorem}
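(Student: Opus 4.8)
The plan is to prove Theorem~\ref{theorem:RC-fixed-point} by induction on the dimension $n$, using the reduction to a $G$-Mori fiber space and the lifting machinery of Section~\ref{section:RC}. The base case $n=0$ is trivial (a point is fixed by everything), so suppose the statement holds in all dimensions below $n$. Given $X\in\RR(n)$ and a finite subgroup $G\subset\Aut(X)$, I would first replace $X$ by a smooth projective $G$-equivariant model: using equivariant resolution and compactification we obtain a smooth projective $G$-variety $\tilde X$ that is $G$-equivariantly birational to $X$, hence still rationally connected. Since rational connectedness implies uniruledness, we may run a $G$-Minimal Model Program (after passing to a $G\Q$-factorial model) and arrive at a $G$-Mori fiber space $\tilde X'\to S$ with $\rho^G(\tilde X'/S)=1$. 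The key point, where Conjecture~\ref{conjecture:BAB} enters, is to bound the index: when $S$ is a point, $\tilde X'$ is a $G$-Fano variety with terminal (or at worst Kawamata log terminal) singularities, and boundedness of such Fanos gives a uniform bound on the order of a linear automorphism group acting on the ambient space, hence a uniformly bounded-index subgroup $H\subset G$ fixing a point.

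\smallskip
\emph{The reduction to a nontrivial fibration.} The hard case is when the MMP output has $S$ a point, i.e.\ $\tilde X'$ is a $G$-Fano variety. Here I would invoke Conjecture~\ref{conjecture:BAB}: the $G$-Fano varieties arising in dimension $n$ form a bounded family, so up to a uniformly bounded index we may $G$-equivariantly embed $\tilde X'$ into a fixed projective space $\P^N$ with $G$ acting linearly. The group $\Aut(\tilde X')$ is then an algebraic group whose connected component and component group are bounded in terms of the family, and a general orbit-counting or fixed-locus argument (using that a bounded variety has a $G$-fixed point after passing to a bounded-index subgroup, via Theorem~\ref{theorem:Jordan} applied in $\P^N$) yields a subgroup of bounded index with a fixed point on $\tilde X'$. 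This is exactly the step that cannot be made unconditional without boundedness, and I expect it to be the main obstacle: controlling the constant $J$ uniformly across the whole bounded family, rather than just for a single variety.

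\smallskip
\emph{The fibered case and descent.} If instead $\dim S>0$, then $S$ is rationally connected of dimension $<n$ (being dominated by $\tilde X'$), so by the induction hypothesis there is a bounded-index subgroup $H\subset G$ fixing a point $s\in S$. Applying Corollary~\ref{corollary:G-contraction} to the $G$-contraction $\tilde X'\to S$ and the $H$-invariant rationally connected subvariety $\{s\}\subset S$, I obtain an $H$-invariant rationally connected subvariety $Z\subsetneq\tilde X'$ dominating $\{s\}$; concretely $Z$ lies in the fiber over $s$, so $\dim Z<n$. Now I would pull $Z$ back along the birational MMP map: Lemma~\ref{lemma:G-MMP} (applied with $F=H$) produces an $H$-invariant rationally connected subvariety $Z_0\subsetneq\tilde X$ on the smooth model. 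Shrinking $H$ by a further bounded factor via the induction hypothesis applied to $Z_0$ (whose dimension is $<n$), we find a bounded-index subgroup fixing a point of $Z_0\subset\tilde X$, hence a point of $\tilde X$ itself. Since all index bounds introduced depend only on $n$ and the boundedness constant, their product is a constant $J=J(n)$, completing the induction.

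\smallskip
The main subtlety I would watch carefully is the bookkeeping of indices: each appeal to the inductive hypothesis, to Theorem~\ref{theorem:Jordan}, and to the boundedness of Fanos contributes a factor, and one must verify these factors depend only on $n$ and not on the individual variety. The genuinely hard ingredient remains the Fano case, where Conjecture~\ref{conjecture:BAB} is indispensable; everything else is a formal descent through the MMP using the lifting results already established in Section~\ref{section:RC}.
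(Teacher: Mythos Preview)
Your proposal is correct and follows the same approach as the paper, which packages the Mori fiber space step into Lemmas~\ref{lemma:Fano-fixed-point} and~\ref{lemma:MF-fixed-point} and then lifts the resulting fixed point on $\tilde X'$ back to an invariant rationally connected proper subvariety of $\tilde X$ via Lemma~\ref{lemma:G-MMP} before inducting once more. Two minor remarks: that lifting step via Lemma~\ref{lemma:G-MMP} is needed in your Fano case as well (a fixed point on $\tilde X'$ is not yet one on $\tilde X$), and the paper's concrete mechanism for producing a fixed point on a bounded Fano---in place of your ``general orbit-counting or fixed-locus argument''---is to pass to the abelian Jordan subgroup inside $\GL_{N+1}$, choose semi-invariant hyperplanes in $\P^N$, and intersect with $\tilde X'$ down to a finite invariant set of size at most the degree $d=d(n)$, whose point stabilizers then have the required bounded index.
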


\begin{remark}
In the proof of Theorem~\ref{theorem:RC-Jordan}
we will only use the particular case of Theorem~\ref{theorem:RC-fixed-point}
for smooth rationally connected varieties.
However, it is more convenient to prove
it without any assumptions on singularities.
In any case, it does not make a big difference
(see Corollary~\ref{corollary:non-singular} below).
\end{remark}

Sometimes it would be convenient to restrict
ourselves to non-singular varieties when proving
assertions like Theorem~\ref{theorem:RC-fixed-point}.
It is possible by the following (nearly trivial)
observation.

\begin{lemma}\label{lemma:non-singular}
Let $\CC$ be some set of varieties, and let $\CC'\subset\CC$.
Suppose that for any
$X\in\CC$ and for any finite group $G\subset\Aut(X)$ there is a
variety $X'\in\CC'$ with $G\subset\Aut(X')$ and a
$G$-equivariant surjective morphism
$X'\to X$. Then
$\CC$ has almost fixed points if and only if $\CC'$ does.
\end{lemma}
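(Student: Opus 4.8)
The plan is to prove the two implications separately, exploiting the containment $\CC'\subset\CC$ for one direction and pushing fixed points forward along the given equivariant morphism for the other. In both cases the defining constant of Definition~\ref{definition:almost-fixed} will be transported verbatim, so the content is entirely formal.

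First I would dispatch the direction ``$\CC$ has almost fixed points $\Rightarrow$ $\CC'$ has almost fixed points''. This is immediate from $\CC'\subset\CC$: if $J=J(\CC)$ is a constant as in Definition~\ref{definition:almost-fixed}, then for any $X'\in\CC'\subset\CC$ and any finite subgroup $G\subset\Aut(X')$ the defining property of $\CC$ already yields a subgroup $F\subset G$ of index at most $J$ fixing a point of $X'$. Hence the same $J$ witnesses the almost fixed point property for $\CC'$, and no hypothesis on the morphisms is needed here.

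For the converse ``$\CC'$ has almost fixed points $\Rightarrow$ $\CC$ has almost fixed points'', let $J'=J(\CC')$ be the corresponding constant. Given $X\in\CC$ and a finite subgroup $G\subset\Aut(X)$, I would invoke the hypothesis to produce $X'\in\CC'$ together with an embedding $G\subset\Aut(X')$ and a $G$-equivariant surjective morphism $\pi\colon X'\to X$. Since $X'\in\CC'$, there is a subgroup $F\subset G$ of index at most $J'$ and a point $P'\in X'$ fixed by $F$. The key step is the elementary observation that $\pi(P')\in X$ is then fixed by the \emph{same} subgroup $F$: for every $g\in F$ one has $g\cdot\pi(P')=\pi(g\cdot P')=\pi(P')$ by $G$-equivariance of $\pi$. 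Thus $F$ has index at most $J'$ in $G$ and fixes a point of $X$, so the constant $J=J'$ works for $\CC$.

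I do not expect a genuine obstacle, as the argument is purely diagrammatic; the only point that deserves attention is \emph{uniformity} of the constant. One must check that the same numerical bound $J(\CC)=J(\CC')$ serves every $X\in\CC$ simultaneously, rather than a constant depending on the chosen $X'$ or $\pi$. This is clear because the index $[G:F]\le J'$ is controlled solely by membership $X'\in\CC'$ and is untouched by the push-forward, which neither enlarges $G$ nor increases the index of $F$.
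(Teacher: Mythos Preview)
Your proof is correct and follows exactly the paper's approach: the paper's one-line proof (``An image of a fixed point under an equivariant morphism is again a fixed point'') is precisely your key step in the nontrivial direction, while the trivial direction via $\CC'\subset\CC$ is left implicit there. You have simply written out the details more carefully, including the uniformity check, which is appropriate.
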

\begin{proof}
An image of a fixed point under an equivariant morphism is again
a fixed point.
\end{proof}

\begin{corollary}\label{corollary:non-singular}
The set $\RR(n)$ of rationally connected varieties
of dimension $n$ has almost fixed points
if and only if the set $\RR'(n)$ of non-singular
rationally connected varieties does.
\end{corollary}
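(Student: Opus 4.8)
The plan is to invoke Lemma~\ref{lemma:non-singular} directly, taking $\CC=\RR(n)$ and $\CC'=\RR'(n)$. By that lemma, it suffices to verify that for any rationally connected variety $X$ of dimension $n$ and any finite subgroup $G\subset\Aut(X)$, there exists a non-singular rationally connected variety $X'$ of the same dimension, also equipped with a $G$-action, together with a $G$-equivariant surjective morphism $X'\to X$. The key observation is that rational connectedness is birationally invariant (as noted right after Definition~\ref{definition:RC}), so any resolution of $X$ will again be rationally connected and of dimension $n$, hence will lie in $\RR'(n)$.

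The construction I would use is an equivariant resolution of singularities. First I would note that $G$ acts on $X$ by regular automorphisms, so I can take a $G$-equivariant resolution $X'\to X$, which exists in characteristic zero (for instance by functorial Hironaka-style resolution, which is automatically equivariant under any automorphism group since the construction is canonical). This produces a smooth variety $X'$ on which $G$ acts, with a $G$-equivariant proper birational—hence surjective—morphism $X'\to X$. Since $X'$ is smooth of dimension $n$ and birational to the rationally connected variety $X$, it belongs to $\RR'(n)$, and $G\subset\Aut(X')$ via the induced action on the resolution.

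With this $X'$ in hand, the hypotheses of Lemma~\ref{lemma:non-singular} are satisfied for the pair $(\RR(n),\RR'(n))$: every $X\in\RR(n)$ with a finite $G\subset\Aut(X)$ admits such an $X'\in\RR'(n)$ with a $G$-equivariant surjection $X'\to X$. Therefore the lemma yields that $\RR(n)$ has almost fixed points if and only if $\RR'(n)$ does, which is exactly the claim.

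The only point requiring genuine care is the \emph{equivariance} of the resolution, i.e.\ ensuring the whole $G$-action lifts to $X'$ rather than just individual automorphisms. This is guaranteed by the functoriality of resolution of singularities in characteristic zero: because the resolution is canonical with respect to isomorphisms, it commutes with the $G$-action and the lift is automatic. I expect this functoriality to be the main (and essentially the only) substantive ingredient; once it is granted, the rest is a formal application of Lemma~\ref{lemma:non-singular} together with the birational invariance of rational connectedness.
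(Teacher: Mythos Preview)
Your proposal is correct and matches the paper's intended argument: the corollary is stated without proof in the paper, as an immediate consequence of Lemma~\ref{lemma:non-singular} via equivariant resolution of singularities (the paper cites \cite{Abramovich-Wang} for this elsewhere). Your discussion of functoriality and birational invariance of rational connectedness fills in exactly the details the authors left implicit.
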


To prove Theorem~\ref{theorem:RC-fixed-point}
we will need its particular case concerning Fano varieties.

\begin{lemma}\label{lemma:Fano-fixed-point}
Let $\FF(n)$ be the set of all Fano varieties
of dimension $n$ with terminal singularities,
and assume that Conjecture~\ref{conjecture:BAB}
holds in dimension $n$. Then~$\FF(n)$ has almost fixed points.
\end{lemma}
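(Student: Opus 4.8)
The plan is to combine the boundedness provided by Conjecture~\ref{conjecture:BAB} with Theorem~\ref{theorem:Jordan} applied to a tangent space at a suitable fixed point. First I would invoke Conjecture~\ref{conjecture:BAB} in dimension $n$: the Fano varieties of dimension $n$ with terminal singularities form a bounded family, so they are parametrized by a finite number of algebraic families, and in particular there is a uniform bound on the geometry of such $X$. The key consequence of boundedness I want to extract is a uniform bound on the \emph{anticanonical degree} and on the singularities, which will let me produce a $G$-fixed point in a bounded space. More precisely, after passing to a $G$-equivariant resolution (using Lemma~\ref{lemma:non-singular} in spirit, or working directly), I would look for a low-degree $G$-invariant cycle or a $G$-fixed point in an embedding of bounded dimension.

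The cleanest route I would take is the following. By boundedness, every $X\in\FF(n)$ admits a $G$-equivariant embedding into a fixed projective space $\P^N$ with $N=N(n)$ bounded, via a multiple $-mK_X$ of the anticanonical class with $m=m(n)$ bounded (the anticanonical ring is finitely generated and, by boundedness, one can choose $m$ uniformly so that $-mK_X$ is very ample, or at least defines such an embedding after a bounded adjustment). Since $-mK_X$ is canonically attached to $X$, the action of $G\subset\Aut(X)$ is induced by a linear action, giving an embedding $G\subset\PGL_{N+1}(\Bbbk)$. Now I would apply a Jordan-type bound for $\PGL_{N+1}$: there is a normal abelian subgroup $A\subset G$ of index at most $I=I(N+1)$, bounded in terms of $n$ alone. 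An abelian linear group is simultaneously diagonalizable, hence has a common eigenvector, i.e. a fixed point in $\P^N$; intersecting the eigenspace decomposition with $X$ (or more carefully, using that a finite abelian group acting on a projective variety of positive dimension has a fixed point by the holomorphic Lefschetz fixed-point formula / the fact that the fixed locus is nonempty) yields a fixed point of $A$ on $X$. This produces a subgroup $F=A$ of index at most $J=J(n)$ with a fixed point on $X$, which is exactly what Definition~\ref{definition:almost-fixed} demands.

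The step I expect to be the main obstacle is guaranteeing that the fixed point actually lies on $X$ and not merely in the ambient $\P^N$. A finite abelian group acting on $\P^N$ certainly fixes points, but those fixed points need not lie on $X$; one must use that $X$ is positive-dimensional and rationally connected (equivalently here, Fano of dimension $n$) to force a fixed point on $X$ itself. The correct tool is that a finite group acting on a variety with the homotopy type / cohomology of a rationally connected variety must have a fixed point when it is abelian and $X$ is suitably positive — more robustly, one can argue by the topological Lefschetz fixed-point formula, since a rationally connected variety has trivial higher cohomology of the structure sheaf and nonzero Euler characteristic, so every element of finite order has nonempty fixed locus, and then one runs an induction on the group using that the fixed locus of a normal subgroup is invariant. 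Handling the singular case (terminal singularities rather than smooth) is a secondary difficulty, but by Corollary~\ref{corollary:non-singular} combined with Lemma~\ref{lemma:non-singular} it suffices to treat a smooth $G$-equivariant model, so I would reduce to the smooth Fano situation before applying the Lefschetz argument.

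Alternatively, and perhaps more in the spirit of the surrounding results, I would bypass the Lefschetz argument by exploiting boundedness differently: choose a $G$-fixed point as a base point of a $G$-invariant component of the Hilbert scheme of rational curves, whose dimension is bounded by boundedness of $\FF(n)$, and then linearize the $G$-action at a smooth fixed point to apply Theorem~\ref{theorem:Jordan} to $G$ acting on the tangent space. In either case the essential input is that Conjecture~\ref{conjecture:BAB} converts the a priori unbounded geometry into a bounded linear-algebra problem where classical Jordan (Theorem~\ref{theorem:Jordan}) applies with a constant depending only on $n$.
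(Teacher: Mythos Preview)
Your outline matches the paper's for the first two moves: use Conjecture~\ref{conjecture:BAB} to get a uniform very ample $-mK_X$ embedding $X\hookrightarrow\P^N$ with $N=N(n)$, then lift $G$ to a linear group and apply Theorem~\ref{theorem:Jordan} to obtain an abelian subgroup $A\subset G$ of index $\le I(N+1)$. The divergence is precisely at the step you flag as the main obstacle, and your proposed fixes do not close it. The holomorphic Lefschetz argument shows that a \emph{single} finite-order automorphism of a smooth rationally connected variety has a fixed point, but the induction on $A$ breaks: the fixed locus $X^g$ need not be rationally connected (e.g.\ an involution on a smooth quartic threefold can fix a K3 surface, where $h^{0,2}=1$), so one cannot re-apply the same argument to the remaining generators. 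Your alternative via Hilbert schemes is too vague to evaluate.

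The paper's resolution is much simpler and avoids the issue altogether: it does \emph{not} try to find an $A$-fixed point on $X$. Boundedness gives not only $N=N(n)$ but also a uniform degree bound $d=d(n)$ on $X\subset\P^N$. Since $A$ is diagonalizable, there are $N+1$ linearly independent $A$-invariant hyperplanes $H_1,\dots,H_{N+1}\subset\P^N$; intersecting $X$ with $H_1,H_2,\dots$ successively, at some step $k$ the intersection $X\cap H_1\cap\dots\cap H_k$ becomes a finite $A$-invariant set of at most $d$ points. The stabilizer in $A$ of any one of these points then has index $\le d$ in $A$, hence index $\le d\cdot I(N+1)$ in $G$, and fixes a point of $X$. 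This is the missing idea you need.
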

\begin{proof}
Using Noetherian induction,
one can show that there exists a positive integer $m$ such that
for any $X\in \FF(n)$ the divisor $-mK_X$
is very ample and gives
an embedding 
$$X\hookrightarrow\mathbb{P}^{\dim |-mK_X|}.$$
So we may assume that any $X\in \FF(n)$ admits
an embedding $X \hookrightarrow \mathbb P^N$ for some $N=N(n)$
(that does not depend on $X$)
as a subvariety of degree at most $d=d(n)$.
Moreover, the action of $G\subset \Aut(X)$ is induced by an action
of some linear group $\Gamma\subset \operatorname{GL}_{N+1}(\mathbb C)$.
By Theorem~\ref{theorem:Jordan} there exists an abelian subgroup
$\Gamma_0\subset \Gamma$ of index at most $I=I(N+1)$.
Let $G_0\subset G$
be the image of $\Gamma_0$ under the natural projection from $\Gamma$ to
$G$. Take linear independent
$\Gamma_0$-semi-invariant sections
$$s_1,\ldots,s_{N+1}\in H^0(X,-mK_X).$$
They define $G_0$-invariant
hyperplanes $H_1,\ldots, H_{N+1}\subset \mathbb P^N$.
Let $k$ be the minimal positive integer such that
$$X\cap H_1\cap \ldots \cap H_k=\{P_1,\ldots,P_r\}$$
is a finite ($G_0$-invariant) set. Then $r\leqslant d$.
Since the stabilizer $G_1\subset G_0$ of $P_1$
is a subgroup of index at most $r\le d$,
the assertion of the lemma follows.
\end{proof}

Lemma~\ref{lemma:Fano-fixed-point}
allows us to derive a slightly wider
particular case of Theorem~\ref{theorem:RC-fixed-point}
involving $G$-Mori fiber spaces
from the assertion of Theorem~\ref{theorem:RC-fixed-point}
for lower dimensions.

\begin{lemma}\label{lemma:MF-fixed-point}
Suppose that the sets $\mathcal{R}(k)$ of rationally connected
varieties of dimension $k$ have almost fixed points for
$k\le n-1$, and assume that Conjecture~\ref{conjecture:BAB}
holds in dimension~$n$. Then
there is a constant $J=J(n)$ such that for any finite group
$G$ and for any rationally connected
$G$-Mori fiber space $\phi:M\to S$ with $\dim(M)=n$
there is a finite subgroup of index at most $J$ in $G$ acting on
$M$ with a fixed point.
\end{lemma}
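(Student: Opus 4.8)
The plan is to argue by cases according to $\dim S$, producing in each case a subgroup $H\subset G$ of index bounded solely in terms of $n$ together with an $H$-invariant rationally connected subvariety $W\subsetneq M$; applying the almost fixed point hypothesis to $W$, whose dimension is at most $n-1$, then yields a point of $M$ fixed by a further subgroup of bounded index. Throughout one uses that $S$ is rationally connected, being the image of the rationally connected $M$ under the $G$-equivariant morphism $\phi$, that $\dim S<\dim M=n$ by the definition of a $G$-Mori fiber space, and that any $G$-contraction is also an $F$-contraction for every subgroup $F\subset G$, so that the results of Section~\ref{section:RC} apply to subgroups of $G$ as well.

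Suppose first that $\dim S>0$. Then $S$ is a rationally connected variety of dimension at most $n-1$ carrying a $G$-action, so the hypothesis that $\RR(\dim S)$ has almost fixed points provides a subgroup $G_1\subset G$ of bounded index and a point $s\in S$ fixed by $G_1$. The one-point set $T=\{s\}$ is a $G_1$-invariant rationally connected subvariety of $S$, so Corollary~\ref{corollary:G-contraction}, applied to the $G_1$-contraction $\phi\colon M\to S$, yields a $G_1$-invariant rationally connected subvariety $Z\subsetneq M$ dominating $T$. Since $Z$ is a proper irreducible subvariety of the $n$-dimensional variety $M$, we have $\dim Z\le n-1$, and applying the hypothesis once more to the rationally connected variety $Z$ with its $G_1$-action produces a subgroup $G_2\subset G_1$ of bounded index fixing a point of $Z$, hence a point of $M$. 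The total index is a product of two constants attached to dimensions at most $n-1$, and is therefore bounded in terms of $n$.

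It remains to treat the case $\dim S=0$, in which $M$ is a $G$-Fano variety. Here the aim is to reduce to Lemma~\ref{lemma:Fano-fixed-point}, and the essential point is to pass from the Kawamata log terminal singularities permitted by the definition of a $G$-Mori fiber space to the terminal singularities required in $\FF(n)$. I would take a $G$-equivariant $\Q$-factorial terminalization $\pi\colon M'\to M$, so that $M'$ is terminal and $\Q$-factorial with $K_{M'}=\pi^{*}K_{M}$, and then run a $G$-Minimal Model Program on the rationally connected variety $M'$, arriving at a terminal $G$-Mori fiber space. If its base is a point, we obtain a terminal $G$-Fano variety lying in $\FF(n)$, which is bounded by Conjecture~\ref{conjecture:BAB}, and Lemma~\ref{lemma:Fano-fixed-point} supplies a subgroup $H$ of bounded index fixing a point there; if instead the base has positive dimension, the argument of the preceding paragraph applies to that fibration and again produces such an $H$ together with a fixed point on the terminal model. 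In either situation, regarding this fixed point as a zero-dimensional rationally connected subvariety and lifting it back through the $G$-Minimal Model Program by Lemma~\ref{lemma:G-MMP}, then pushing it forward along the birational morphism $\pi$, gives an $H$-invariant rationally connected subvariety $W\subsetneq M$, to which the concluding step of the previous paragraph applies.

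The main obstacle is precisely this last case: controlling the singularities well enough to invoke Conjecture~\ref{conjecture:BAB} and Lemma~\ref{lemma:Fano-fixed-point}, that is, verifying that the terminalization and the subsequent $G$-Minimal Model Program can be performed $G$-equivariantly and that the resulting terminal Fano model is genuinely bounded, while keeping track of the fact that the lifted subvariety $W$ remains a \emph{proper} subvariety of $M$ so that the inductive hypothesis is applied in strictly smaller dimension. Once this is arranged, every index that arises is a product of at most two of the constants attached to $\RR(k)$ with $k\le n-1$ and the constant from Lemma~\ref{lemma:Fano-fixed-point}, so the resulting constant $J=J(n)$ depends only on $n$, as required.
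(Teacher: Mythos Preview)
Your argument for $\dim S>0$ is the same as the paper's, modulo phrasing (the paper passes through the image $G_S\subset\Aut(S)$ and pulls back the bounded-index subgroup, which is exactly what you do implicitly).

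Where you diverge from the paper is the case $\dim S=0$. The paper simply writes ``By Lemma~\ref{lemma:Fano-fixed-point} we may suppose that $1\le\dim(S)\le n-1$,'' i.e.\ it invokes the bounded Fano lemma directly and is done. You instead worry that the definition of a $G$-Mori fiber space in Section~\ref{section:preliminaries} only requires Kawamata log terminal singularities, whereas Lemma~\ref{lemma:Fano-fixed-point} is stated for terminal Fanos, and you propose a terminalization followed by a further $G$-MMP to reach an honest member of $\FF(n)$. This concern is legitimate on a literal reading of the lemma, but in the paper's actual use (the proof of Theorem~\ref{theorem:RC-fixed-point}) the $G$-Mori fiber space is obtained by running the $G$-MMP on a smooth variety, so its singularities are automatically terminal and Lemma~\ref{lemma:Fano-fixed-point} applies without further work. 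So your extra step is not wrong, just unnecessary in context; the paper is tacitly assuming terminal singularities here.

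One small correction to your detour: for a terminalization $\pi\colon M'\to M$ of a variety that is merely klt (not canonical), one does \emph{not} have $K_{M'}=\pi^{*}K_M$; rather $K_{M'}=\pi^{*}K_M+\sum a_iE_i$ with $-1<a_i\le 0$, since divisors of negative discrepancy must be extracted. This does not affect the logic of your argument (rational connectedness of $M'$ and the ability to run a $G$-MMP survive), but the equality as written is false in general.
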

\begin{proof}
Let $\phi:M\to S$ be a rationally connected
$G$-Mori fiber space of dimension $n$.
We are going to show that there is a constant $J$
that does not depend on $M$ and $G$ such that
there exists
a subgroup $H\subset G$ of index at most $J$
acting on $M$ with a fixed point.
By Lemma~\ref{lemma:Fano-fixed-point}
we may suppose that $1\le\dim(S)\le n-1$.

Consider an exact sequence of groups
$$
1\to G_{\phi}\longrightarrow G\stackrel{\theta}
\longrightarrow G_{S}\to 1,
$$
where the action of $G_{\phi}$ is fiberwise with respect to $\phi$ and
$G_S$ is the image of $G$ in $\Aut(S)$.
Note that $S$ is rationally connected since so is $M$.
By assumption there is a constant~$J_1$
that does not depend on $S$ and $G$ such that
there exists
a subgroup $F_S\subset G_S$ of index at most~$J_1$
acting on $S$ with a fixed point. Let $P\in S$ be one of the points fixed
by $F_S$.

Define a subgroup $F\subset G$ to be the preimage of the subgroup
$F_S\subset G_S$ under the homomorphism $\theta$.
Then $\phi:M\to S$ is an $F$-contraction.
By Corollary~\ref{corollary:G-contraction} applied to the group $F$
and the contraction $\phi$ there exists
an $F$-invariant rationally connected subvariety
$Z\subset M$ such that $\phi(Z)=P$. In particular, $\dim(Z)<n$.
By assumption there is a constant $J_2$ that
does not depend on $Z$ and $F$ such that
there is a subgroup $H\subset F$ of index at most $J_2$
acting on $Z$ (and thus on $X$) with a fixed point. The assertion follows
since $[G:F]=[G_S:F_S]\le J_1$.
\end{proof}

Now we are ready to prove Theorem~\ref{theorem:RC-fixed-point}.

\begin{proof}[{Proof of Theorem~\textup{\ref{theorem:RC-fixed-point}}}]
Let $X$ be a non-singular
(or terminal) rationally connected variety of dimension
$n$, and $G\subset\Aut(X)$ be a finite subgroup.
By Corollary~\ref{corollary:non-singular}
it is enough to show that there is a constant $J$
that does not depend on $X$ and $G$ such that
there exists
a subgroup $H\subset G$ of index at most $J$
acting on $X$ with a fixed point.

Run a $G$-Minimal Model Program on $X$,
resulting in a $G$-Mori fiber space $X'$ and a rational map
$f:X\dasharrow X'$ that factors into a sequence of
$G$-contractions and $G$-flips.
By Lemma~\ref{lemma:MF-fixed-point} there is a constant $J_1$
that does not depend on $X'$ (and thus on $X$) and $G$
such that there exists
a subgroup $F\subset G$ of index at most $J_1$
acting on $X'$ with a fixed point.
Using Lemma~\ref{lemma:G-MMP}
applied to the group $F$, we obtain an $F$-invariant rationally connected
subvariety $Z\subsetneq X$.

The rest of the argument is similar to that in the proof of
Lemma~\ref{lemma:MF-fixed-point}.
Using induction in $n$, we see that there is a constant $J_2$ that
does not depend on $Z$ and $F$ such that
there is a subgroup $H\subset F$ of index at most $J_2$
having a fixed point on $Z$ (and thus on $X$),
and the assertion of the theorem follows.
\end{proof}

\begin{remark}
To prove Theorem~\ref{theorem:RC-fixed-point}
one could actually use a weaker version of Lemma~\ref{lemma:MF-fixed-point}.
For this purpose it is sufficient to know that the $G$-Mori fiber space
contains an $F$-invariant rationally connected subvariety $Z'\subsetneq X'$
for some subgroup $F\subset G$ of bounded index, without assuming that
$Z'$ is a point.
\end{remark}

Now we are going to derive Theorem~\ref{theorem:RC-Jordan}
from Theorem~\ref{theorem:RC-fixed-point}.
We will need the following easy observation.

\begin{lemma}
\label{lemma:normal}
Let $G$ be a group and $H\subset G$ be a subgroup of finite index~\mbox{$[G:H]=j$}.
Suppose
that $H$ has some property $\mathcal{P}$ that is preserved under
intersections and under conjugation in $G$. Then there exists
a normal subgroup $H'\subset G$ of finite index~\mbox{$[G:H']\le j^j$}
such that $H'$ also enjoys the property $\mathcal{P}$.
\end{lemma}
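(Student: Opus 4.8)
The plan is to take $H'$ to be the \emph{normal core} of $H$ in $G$, that is, the intersection of all conjugates of $H$:
$$
H'=\bigcap_{g\in G} gHg^{-1}.
$$
First I would check that $H'$ is normal in $G$: for any $x\in G$, conjugation by $x$ permutes the conjugates $gHg^{-1}$ among themselves and hence fixes their intersection, so $xH'x^{-1}=H'$.

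Next I would verify that $H'$ enjoys property $\mathcal{P}$. Each conjugate $gHg^{-1}$ has property $\mathcal{P}$, since $H$ does and $\mathcal{P}$ is preserved under conjugation in $G$. The point to observe here is that although the intersection is indexed by all of $G$, the number of \emph{distinct} subgroups among the $gHg^{-1}$ is at most $[G:\mathrm{N}_G(H)]\le[G:H]=j$, because $gHg^{-1}=g'H(g')^{-1}$ whenever $g$ and $g'$ lie in the same left coset of the normalizer $\mathrm{N}_G(H)$, and $H\subseteq\mathrm{N}_G(H)$. Thus $H'$ is a \emph{finite} intersection of subgroups each having property $\mathcal{P}$, and since $\mathcal{P}$ is preserved under intersections, $H'$ has property $\mathcal{P}$ as well.

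Finally I would bound the index. Let $g_1Hg_1^{-1},\ldots,g_kHg_k^{-1}$ be the distinct conjugates, so $k\le j$ and $H'=\bigcap_{i=1}^{k}g_iHg_i^{-1}$. I would then consider the map of coset spaces
$$
G\longrightarrow\prod_{i=1}^{k}\bigl(G/g_iHg_i^{-1}\bigr),\qquad
x\longmapsto\bigl(x\,g_iHg_i^{-1}\bigr)_{i}.
$$
Two elements have the same image precisely when they lie in the same left coset of $H'$, so the fibres of this map are exactly the left cosets of $H'$; hence $[G:H']$ is bounded by the size of the target, which is $\prod_{i=1}^{k}[G:g_iHg_i^{-1}]=j^{k}\le j^{j}$, using that conjugation is an automorphism and therefore preserves the index. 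The only step requiring any care is the counting of distinct conjugates: this is what keeps the intersection finite (so that the intersection-stability of $\mathcal{P}$ applies) and what yields the stated bound $j^{j}$; everything else is formal.
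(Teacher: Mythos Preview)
Your proof is correct and follows exactly the same approach as the paper: both take $H'$ to be the normal core of $H$, observe that there are at most $j$ distinct conjugates, and bound $[G:H']$ by $j^j$. Your version simply fills in more of the routine details (the normalizer argument for counting conjugates and the coset-map argument for the index bound) that the paper leaves implicit.
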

\begin{proof}
Let $H_1=H, \ldots, H_r\subset G$ be the subgroups that are conjugate to $H$.
Then $r\le j$, and $H'=\bigcap H_i$ is normal and has the property
$\mathcal{P}$. It remains to notice that \mbox{$[G:H']\le j^r$}.
\end{proof}

\begin{proof}[{Proof of Theorem~\ref{theorem:RC-Jordan}}]
We may assume that
the field $\Bbbk$ is algebraically closed.
Let $X$ be a rationally connected variety of dimension $n$,
and~\mbox{$G\subset\Bir(X)$} be a finite group.
Let~$\tilde{X}$ be a regularization
of $G$, i.\,e. $\tilde{X}$ is a projective variety with
an action of $G$ and
a~\mbox{$G$-equivariant} birational map $\xi:\tilde{X}\dashrightarrow X$
(see~\mbox{\cite[Theorem~3]{Sumihiro-1974}}).
Taking a $G$-equivariant resolution
of singularities (see~\cite{Abramovich-Wang}), one
can assume that $\tilde{X}$ is smooth.
Note that $\tilde{X}$ is rationally connected since so is $X$.
By Theorem~\ref{theorem:RC-fixed-point}
there is a constant~$J_1$ that does not depend on~$\tilde{X}$ and $G$
(and thus on $X$) such that there exists a
subgroup $F\subset G$ of index at most $J_1$ and
a point $P\in X$ fixed by $A$.
The action of~$F$ on the
Zariski tangent space $T_P(\tilde{X})\cong\Bbbk^{n}$ is faithful
(see e.\,g.~\cite[Lemma~2.7(b)]{FlennerZaidenberg}).
By Theorem~\ref{theorem:Jordan} applied to~\mbox{$\GL_n(\Bbbk)$}
there is a constant $J_2$ (again independent of anything
except for $n$) such that~$F$ has an abelian subgroup $A$ of index
at most $J_2$. The assertion follows by Lemma~\ref{lemma:normal}.
\end{proof}

Finally, we prove Theorem~\ref{theorem:p-groups}.

\begin{proof}[{Proof of Theorem~\ref{theorem:p-groups}}]
We may assume that
the field $\Bbbk$ is algebraically closed.
Let $X$ be a rationally connected variety of dimension $n$, and 
let~\mbox{$G\subset\Bir(X)$} be a finite $p$-group. Arguing as in
the proof of Theorem~\ref{theorem:RC-Jordan}, we obtain
an abelian subgroup $F\subset G$ of index~\mbox{$[G:F]$} 
bounded by some constant $L$
(that does not depend on $X$ and $G$) with an embedding
$F\subset\GL_n(\Bbbk)$. The latter implies that the
abelian $p$-group $F$ is
generated by at most $n$ elements. On the other hand, if~\mbox{$p>L$}, then
the index of any subgroup of $G$ is at least $p$, so that
the subgroup $F$ coincides with $G$.
\end{proof}

\end{document}